\newcommand{\beq}[1]{\begin{equation}\label{#1}}
\newcommand{\eeq}{\end{equation}}
\def\a{\alpha}  \def\d{\delta} 
    \def\g{\gamma}
 \def\i{\iota} \def\k{\kappa}
\def\t{\tau} \def\om{\omega}
\newcommand{\brac}[1]{\left(#1\right)}
\newcommand{\bfrac}[2]{\left(\frac{#1}{#2}\right)}
\def\E{\mbox{{\bf E}}}
\def\Pr{\mbox{{\bf Pr}}}
\def\whp{{\bf whp}}
\newcommand{\ignore}[1]{}
\def\uar{{\bf uar}}
\newcommand{\keywords}[1]{\par\addvspace\baselineskip
\noindent\keywordname\enspace\ignorespaces#1}
\def\PA{\text{PA}_t(m,\d)}
\newcommand*\samethanks[1][\value{footnote}]{\footnotemark[#1]}
\begin{document}

\mainmatter  % start of an individual contribution

% first the title is needed
\title{Local majority dynamics on preferential attachment graphs}
% a short form should be given in case it is too long for the running head
\titlerunning{Local majority dynamics on preferential attachment graphs}

\author{Mohammed Amin Abdullah\inst{1}\thanks{Research supported by the EPSRC Grant No. EP/K019749/1.}\and Michel Bode\inst{2} \and Nikolaos Fountoulakis\inst{3}\samethanks}

\authorrunning{M. A. Abdullah, M. Bode and N. Fountoulakis}
% (feature abused for this document to repeat the title also on left hand pages)

\institute{Mathematical and Algorithmic Sciences Lab, Huawei Technologies Ltd,\\
\email{mohammed.abdullah@huawei.com}\\
\and School of Mathematics, University of Birmingham,\\
\email{michel.bode@gmx.de}
\and
School of Mathematics, University of Birmingham,\\
\email{n.fountoulakis@bham.ac.uk}
}
%
% NB: a more complex sample for affiliations and the mapping to the
% corresponding authors can be found in the file "llncs.dem"
% (search for the string "\mainmatter" where a contribution starts).
% "llncs.dem" accompanies the document class "llncs.cls".
%

%\toctitle{Lecture Notes in Computer Science}
%\tocauthor{Authors' Instructions}
\maketitle

\begin{abstract}

Suppose in a graph $G$ vertices can be either red or blue. Let $k$ be odd. At each time step, each vertex $v$ in $G$ polls $k$ random neighbours and takes the majority colour. If it doesn't have $k$ neighbours, it simply polls all of them, or all less one if the degree of $v$ is even. We study this protocol on the preferential attachment model of Albert and Barab\'asi  \cite{ar:StatMechs},  which gives rise to a degree distribution that has roughly power-law $P(x) \sim \frac{1}{x^{3}}$, as well as generalisations which give exponents larger than $3$. The setting is as follows: Initially each vertex of $G$ is red independently with probability $\alpha < \frac{1}{2}$, and is otherwise blue. We show that if $\alpha$ is sufficiently biased away from $\frac{1}{2}$, then with high probability, consensus is reached on the initial global majority within $O(\log_d \log_d t)$ steps.  Here $t$ is the number of vertices and $d \geq 5$ is the minimum of $k$ and $m$ (or $m-1$ if $m$ is even), $m$ being the number of edges each new vertex adds in the preferential attachment generative process. Additionally, our analysis  reduces the required bias of $\a$ for graphs of a given degree sequence studied in \cite{LocalMaj} (which includes, e.g., random regular graphs). 
\keywords{Local majority dynamics, preferential attachment, power-law graphs, voting, consensus}
\end{abstract}

\section{Introduction}
Let  $G=(V,E)$ be a graph where each vertex maintains an opinion,  which we speak of in terms of two colours - red and blue.  We make no assumptions about the properties of the colours/opinions except that vertices can distinguish between them. We are interested in distributed protocols on $G$ that can bring about consensus to a single opinion. 

One of the simplest and most widely studied distributed consensus algorithms is the \emph{voter model} (see, e.g., \cite[ch.~14]{AlFi}). In the discrete time setting, at each time step $\t$, each vertex chooses a single neighbour uniformly at random (\uar) and assumes its opinion.  The number of different opinions in the system is clearly non-increasing, and consensus is reached almost surely in finite, non-bipartite, connected graphs. Using an elegant martingale argument, \cite{Peleg} determined the probability of consensus to a particular colour. In our context this would be the sum of the degrees of vertices which started with that colour, as a fraction of the sum of degrees over all vertices. Thus, on regular graphs, for example, if the initial proportion of reds is a constant $\alpha$, the probability of a red consensus is $\alpha$. This probability is increased on non-regular graphs if the minority is ``privileged'' by sitting on high degree vertices (as in say, for example, the small proportion of high degree vertices  in a graph with power-law distribution). This motivates an alternative where the majority is certain, or highly likely, to win.

The \emph{local majority} protocol in a synchronous discrete time setting does the following: At each time step, each vertex $v$ polls all its neighbours and assumes the majority colour in the next time step. This can be motivated by both a prescriptive and a descriptive view. In the former, as a consensus protocol, it can be seen as a distributed co-ordination mechanism for networked systems. In the latter, it can be seen as a natural process occurring, for example in social networks where it may represent the spread of influence. 

%We study a generalisation of this, which we call the \emph{$k$-choice local majority protocol} where a vertex polls a random subset of $k$ neighbours, or the largest odd number it has if its degree is smaller than $k$.
Let $k$ be odd. Suppose at time step $\t=0$ each vertex of a graph $G=(V,E)$ is either red or blue. In this paper we study the following generalisation of the local majority protocol (also in a synchronous, discrete time setting):

\begin{definition}[$k$-choice Local Majority Protocol $\mathcal{MP}^k$]
For each vertex $v \in V$, for each time step $\t=1,2,\ldots$ do the following: choose a set of $k$ neighbours of $v$ uniformly at random. The colour of $v$ at time step $\t$ is the majority colour
of this set at time step $\t-1$. If $v$ does not have $k$ neighbours, then choose a random set of largest possible odd cardinality.
\end{definition}

Clearly, we can retrieve the local majority protocol by setting $k$ to be the maximum degree, for example.

In addition to which colour dominates, one is also interested in how long it takes to reach consensus. In the voter model, there is a duality between the voting process and multiple random walks on the graph. The time it takes for a single opinion to emerge is the same as the time it takes for $n$ independent random walks - one starting at each vertex - to coalesce into a single walk, where two or more random walks coalesce if they are on the same vertex at the same time. Thus, consensus time can be determined by studying this multiple walk process. However, the analyses of local-majority-type protocols have not been readily amenable to the established techniques for the voter model, namely, martingales and coalescing random walks. Martingales have proved elusive and the random walks duality does not readily transfer, nor is there an obvious way of altering the walks appropriately. Thus, ad-hoc techniques and approaches have been developed. 

We say a sequence of events $(\mathcal{E}_t)_t$ occurs \emph{with high probability} (\whp) if $\Pr(\mathcal{E}_t)\rightarrow 1$ as $t \rightarrow \infty$. In this paper, the underlying parameter $t$ which goes to infinity will be the number of vertices in the sequence of graphs $\text{PA}_t(m,\d)$ we consider.

The main result in this paper will be to show that when each vertex of a preferential attachment graph $\text{PA}_t(m,\d)$ (introduced in the next section) is red independently with probability $\a<1/2$, where $\a$ is sufficiently biased away from $1/2$, then the system will converge to the majority colour with high probability, and we give an upper bound for the number of steps this takes.

\section{Preferential attachment graphs}
The preferential attachment models have their origins in the work of Yule \cite{Yule}, where a growing model is proposed in the context 
of the evolution of species. A similar model was proposed by Simon \cite{Simon} in the statistics of language. 
The principle of these models was used by Albert and Barab\'asi  \cite{ar:StatMechs} to describe a random graph model where vertices arrive 
one by one and each of them throws a number of half-edges to the existing graph. Each half-edge is connected to a vertex with probability
that is proportional to the degree of the latter. This model was defined rigorously by Bollob\'as, Riordan, Spencer and 
Tusn\'ady~\cite{DegSeq} (see also~\cite{Diam}). We will describe the most general form of the model which is essentially due to 
Dorogovtsev et al.~\cite{Dor} and Drinea et al.~\cite{Drinea}. Our description and notation below follows that 
from the book of van der Hofstad~\cite{Remco}.

The random graph $\text{PA}_t(m,\d)=(V,E)$ where $V=[t]$ is parameterised by two constants: $m \in \mathbb{N}$, and $\d \in \mathbb{R}$, $\d > -m$.  
It gives rise to a random graph sequence (i.e., a sequence in which each member is a random graph), denoted by
$\brac{\text{PA}_t(m,\d)}_{t=1}^\infty$. The $t$th term of the sequence, $\text{PA}_t(m,\d)$ is a graph with $t$ vertices and $mt$
edges. Further, $\text{PA}_t(m,\d)$ is a subgraph of $\text{PA}_{t+1}(m,\d)$. We define $\text{PA}_t(1,\d)$ first, then use it to define
the general model $\text{PA}_t(m,\d)$ (the Barab\'asi-Albert model corresponds to the case $\d = 0$).

The random graph $\text{PA}_1(1,\d)$ consists of a single vertex with one self-loop. We denote the vertices of $\text{PA}_t(1,\d)$ by
$\{v_1^{(1)}, v_2^{(1)}, \ldots, v_t^{(1)}\}$. We denote the degree of vertex $v_i^{(1)}$ in $\text{PA}_t(1,\d)$ by $D_i(t)$. Then,
conditionally on $\text{PA}_t(1,\d)$, the growth rule to obtain $\text{PA}_{t+1}(1,\d)$ is as follows: We add a single vertex
$v_{t+1}^{(1)}$ having a single edge. The other end of the edge connects to $v_{t+1}^{(1)}$ itself with probability
$\frac{1+\d}{t(2+\d)+(1+\d)}$, and connects to a vertex $v_i^{(1)} \in \text{PA}_t (1,\d)$ with probability
$\frac{D_i(t)+\d}{t(2+\d)+(1+\d)}$ -- we write $v^{(1)}_{t+1}\rightarrow v_i^{(1)}$.  For any $t \in \mathbb{N}$, let $[t]=\{1,\ldots, t \}$. Thus,

\[ \Pr\brac{v^{(1)}_{t+1}\rightarrow v_i^{(1)} \mid \text{PA}_t(1,\d)} = \left\{ 
  \begin{array}{l l}
    \frac{1+\d}{t(2+\d)+(1+\d)} & \quad \text{for $i=t+1$,}\\
    \frac{D_i(t)+\d}{t(2+\d)+(1+\d)} & \quad \text{for $i \in [t]$}
  \end{array} \right.\]
The model $\text{PA}_t(m,\d)$, $m>1$, with vertices $\{ 1,\ldots, t \}$ is derived from
$\text{PA}_{mt}(1,\d/m)$ with vertices $\{v_1^{(1)}, v_2^{(1)}, \ldots, v_{mt}^{(1)}\}$ as follows: For each $i=1, 2, \ldots, t$, we 
 contract the vertices $\{v_{(i-1)+1}^{(1)}, v_{(i-1)+2}^{(1)}, \ldots, v_{(i-1)+t}^{(1)}\}$ into one super-vertex, and identify this
 super-vertex as $i$ in $\text{PA}_t(m,\d)$. When a contraction takes place, all loops and multiple edges are retained. Edges
shared between a set of contracted vertices become loops in the contracted super-vertex. Thus, $\text{PA}_t(m,\d)$ is a graph on $[t]$.

The above process gives a graph whose degree distribution follows a power law with exponent $3+ \delta /m$. This was suggested by 
the analyses in ~\cite{Dor} and~\cite{Drinea}. It was proved rigorously for integral $\delta$ by Buckley and 
Osthus~\cite{BuckleyOsthus2004}.
For a full proof for real $\delta$ see~\cite{Remco}. 
In particular, when $-m < \delta < 0$, the exponent is between 2 and 3. Experimental evidence has shown that this is the case for several
networks that emerge in applications (cf.~\cite{ar:StatMechs}). 
Furthermore, when $m\geq 2$, then $\text{PA}_t(m,\d)$ is \whp\ connected, but when 
$m=1$ this is not the case, giving rise to a logarithmic number of components (see~\cite{Remco}).

\section{Results and related work}
Our main result is the following:
\begin{theorem}\label{MainTheorem}
Let $k \geq 5$ be odd and let $d=m \wedge k$ if $m$ is odd and $d=(m-1) \wedge k$ if $m$ is even. Let $\a^*$ be the smallest positive solution for $x$ in the equation $\Pr\brac{\text{Bin}(d-1,x) \geq \frac{d-1}{2}}=x$. If $\d \geq 0$ and each vertex in $\PA$ is red independently with probability $\a<\a^*$, then for any constant $\varepsilon>0$,  \whp\ under $\mathcal{MP}^k$ every vertex in $\PA$ is blue at all time steps $\t \geq \frac{1+\varepsilon}{\log_d\bfrac{d-1}{2}} \log_d \log_d t$. 
\end{theorem}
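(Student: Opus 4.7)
The proof combines a structural analysis of $\PA$ with an analysis of the dynamics on a locally tree-like structure. The core intuition is: if the $k$-choice poll of any vertex returned $k$ independent samples of the global red density $\rho_\tau$ at time $\tau$, then $\rho_{\tau+1} \approx f(\rho_\tau)$ with $f(x) = \Pr\brac{\text{Bin}(d-1,x) \geq (d-1)/2}$. Since $\alpha < \alpha^*$ and $f(x) = \Theta\brac{x^{(d-1)/2}}$ for small $x$ (using $d\ge 5$), iterating the recursion drives $\rho_\tau$ to zero at a doubly exponential rate, reaching $t^{-\Omega(1)}$ in $\Theta(\log_d \log_d t)$ rounds, and a union bound then extinguishes every red vertex.

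\textbf{Structural step.} I would first show that for all but a $o(t)$-set of vertices $v \in [t]$ the radius-$r$ ball around $v$ in $\PA$ is a tree and contains no vertex of degree exceeding some slowly growing $\Delta(t)$, where $r = \Theta(\log_d \log_d t)$. This uses the standard contraction representation of $\PA$ in terms of the $m=1$ model together with Polya-urn concentration for individual vertex degrees; the hypothesis $\delta \geq 0$ gives power-law exponent at least $3$, which makes the expected number of cycles in a radius-$r$ ball negligible compared to $t$.

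\textbf{Tree dynamics and iteration.} For a vertex $v$ whose $r$-ball is tree-like, I would prove by induction on $\tau \leq r$ that $\Pr\brac{v\text{ is red at time }\tau} \leq p_\tau$, where $p_0 = \alpha$ and $p_{\tau+1} = f(p_\tau)$. The inductive step exploits the tree structure to treat the colours of distinct polled neighbours as independent Bernoullis at parameter $p_{\tau-1}$, because the subtrees rooted at distinct polled children are vertex disjoint. The appearance of $d-1$ rather than $d$ in $f$ reflects that in the BFS recursion one neighbour of each internal vertex is its parent, leaving at most $\deg - 1 \geq d-1$ subtrees feeding upward. From $\alpha < \alpha^*$ and $f(x) \leq C x^{(d-1)/2}$ near zero one derives
\[
p_\tau \;\leq\; \alpha^*\,\brac{\alpha/\alpha^*}^{((d-1)/2)^{\tau}}
\]
up to constants, so $p_{\tau_\star}$ is smaller than any fixed inverse polynomial in $t$ already at $\tau_\star = \frac{1+\varepsilon}{\log_d ((d-1)/2)} \log_d \log_d t$ by choosing a slightly larger $\varepsilon$. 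A union bound over the tree-like vertices then rules out any of them being red at time $\tau_\star$; the $o(t)$ non-tree-like vertices flip to blue in $O(1)$ extra rounds, since their polled neighbours are blue \whp.

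\textbf{Main obstacle.} The principal difficulty is the tree-dynamics step. Colours are independent only at $\tau=0$, and after one step any two vertices sharing a common neighbour are coupled through its new colour; the tree-like-ball hypothesis is precisely what is needed to preserve the recursive independence for $r$ rounds. Calibrating the radius $r$ on which tree-likeness in $\PA$ holds \whp\ against the number of iterations $\tau_\star$ that the recursion needs, while properly handling the size-biasing in the PA polling distribution (a random neighbour of $v$ is not uniform but drawn with probability proportional to degree $+\,\delta$, which one must compare against the clean $d-1$ branching assumed in the fixed-point analysis), is where the bulk of the technical work sits.
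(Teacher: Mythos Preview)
Your overall strategy---iterate $f(x)=\Pr(\operatorname{Bin}(d-1,x)\ge (d-1)/2)$ on a locally tree-like neighbourhood, obtain doubly-exponential decay, and then mop up the exceptional vertices---matches the paper's architecture, and your identification of why $d-1$ (not $d$) enters via the parent edge is exactly right. Two points, however, need correction or substantial fleshing out.

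First, a factual error: under $\mathcal{MP}^k$ the $k$ polled neighbours are chosen \emph{uniformly} from $v$'s neighbour set, not ``with probability proportional to degree $+\,\delta$''. The size-biasing you allude to governs how edges are \emph{created} in the PA construction, not how the protocol samples. So the obstacle you describe in your last paragraph is not the real one.

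Second, the genuine gap is your cleanup step. You assert that the $o(t)$ non-tree-like vertices ``flip to blue in $O(1)$ extra rounds, since their polled neighbours are blue \whp''. This is not automatic and is where the paper does real work. The exceptional set includes the early, high-degree ``core'' vertices; such a vertex may have many edges to other core vertices, and you have given no reason why its poll is dominated by blue neighbours. The paper handles this via a two-scale core structure: an \emph{outer core} $[\kappa_o]$ with $\kappa_o=(\log t)^{\Theta(\log\log t)}$, and a degree lower bound (Lemma~\ref{DegreeLBLemma}) showing every $v\in[\kappa_o]$ has degree at least $(t/\kappa_o)^{\gamma}/\kappa_o^2$, while at most $m\kappa_o$ of its edges land back in $[\kappa_o]$. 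Only this quantitative comparison forces the poll of a core vertex to be overwhelmingly non-core, hence blue, at the relevant time step. Moreover, for vertices just outside the outer core the paper does \emph{not} claim the $\omega$-ball is an exact tree; rather it classifies the possible deviations (one light cycle, at most two paths to heavy vertices) and absorbs them by declaring at most two ``bad'' edges always-red, leaving $m-2\ge 3$ good children to outvote them. Your sketch collapses both of these mechanisms into a single unjustified sentence.
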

Note that $\d = 0$ gives the model proposed in the seminal work of  Albert and Barab\'asi  \cite{ar:StatMechs}, giving power law exponent $3$, and that $\d>0$ gives exponents larger than $3$. We refer the reader to \cite{Remco} for further details.
 
Note, for  $d=5,7,9,11$ we have $\a^*=0.232, 0.347, 0.396, 0.421$ to 3 significant figures (s.f.), respectively.  Of course, $\a^* \rightarrow \frac{1}{2}$ as $d \rightarrow \infty$.

The most closely related work is \cite{LocalMaj}. Here, the same protocol was studied on random graphs of a given degree sequence (which includes random regular graphs) and Erd\H{o}s--R\'{e}nyi random graphs slightly above the connectivity threshold. Similar results similar to Theorem \ref{MainTheorem} were obtained, and in the case of the former model, an almost matching lower bound was shown. It should be noted that the thresholds for $\a$ obtained in this work apply equally to the models in \cite{LocalMaj}, and improve the thresholds for $\a$. To contrast, in that paper, the thresholds for $d=5,7,9,11$ were $0.092, 0.182, 0.234, 0.268$ to 3 s.f., respectively. 

 In \cite{Mossel}, (full) local majority dynamics on $d$-regular $\lambda$-expanders on $n$ vertices is studied. In our notation, they show that when $\alpha \leq 1/2-\frac{2\lambda n}{d}$, there is convergence to the initial majority, so long as $\frac{\lambda}{d} \leq \frac{3}{16}$. Since $\lambda \geq (1-o(1))\sqrt{d}$ for a $d$-regular graph, this condition implies $d \geq 29$. In contrast, our results apply for $d \geq 5$.

In \cite{Cruise} a variant of local majority  is studied where a vertex contacts $m$ others and if $d$ of them have the same colour, the vertex subsequently assumes this colour.  They demonstrate convergence time of $O(\log n)$ and error probability -- the probability of converging on the initial minority -- decaying exponentially with $n$. However, the analysis is done only for the complete graph; our analysis of sparse graphs is a crucial difference, because the techniques employed for complete graphs do not carry through to sparse graphs, nor are they easily adapted. The error probability we give is not as strong but still strong, nevertheless. Furthermore, the convergence time we give is much smaller. 

In \cite{ColinVoting} the authors study the following protocol on random regular graphs and regular expanders:  Each vertex picks two neighbours at random, and takes the majority of these with itself. They show convergence to the initial majority in $O(\log n)$ steps with high probability, subject to sufficiently large initial bias and high enough vertex degree. However, in their setting, the placement of colours can be made adversarially.

In summary, our contribution is demonstrating convergence and time of convergence to initial majority for a generalisation of local majority dynamics for preferential attachment graphs with power-law exponent $3$ and above. As far as we know this is the only such result for power-law graphs (by preferential attachment or otherwise). Furthermore, we have improved the bias thesholds for graphs
of a given degree sequence studied by the first author in \cite{LocalMaj}, which, to the best of our knowledge, were already the best or only known results for small degree graphs in this class (which includes, e.g., random regular graphs).

\section{Structural results}
Throughout this paper we let $\g=\g(m,\d)=\frac{1}{2+\d/m}$. Observe the condition $\d>-m$ (which must be imposed), implies $0<\g<1$. 

Furthermore, for two non-negative functions $f(t), g(t)$ on $\mathbb{N}$ we write $f(t) \lesssim g(t)$ to denote that $f(t) = O(g(t))$.  The underlying asymptotic variable will
 always be $t$, the number of vertices in $\text{PA}_t(m,\d)$.

Let $A$ be a large constant and let
\begin{equation*}
\om = A\log \log t.
\end{equation*}
Let
\begin{equation*}
\k= (\log t)^{7\om}
\end{equation*} 
and define as the \emph{inner core} the vertices $[\k]$, and refer to them as \emph{heavy} vertices. We also refer to vertices outside the inner core as  \emph{light} vertices.

Let
\begin{equation*}
\k_o= (\log t)^{999\om}
\end{equation*}
and define as the \emph{outer core} the vertices $[\k_o]$. 

Call a path \emph{short} if it has length at most $\om$.  Call a cycle \emph{short} if it has at most $2\om+1$ vertices. Here ``cycle'' includes a pair of vertices connected by parallel edges and a vertex with a self-loop.
 
Below, we repeatedly apply the following, which is proved in \cite{PrefPerc} (and for the case $k=1$ was given in \cite{Remco}): 

\begin{proposition}\label{structureProp}
Suppose $i_1, j_1, i_2, j_2, \ldots, i_k, j_k$ are vertices in $\PA$ where $i_s<j_s$ for $s=1,2,\ldots,k$. Then
\begin{equation*}
\Pr(j_1\rightarrow i_2 \cap j_2 \rightarrow i_2, \ldots, j_k \rightarrow i_k)
 \leq M^k\frac{1}{i_1^\g j_1^{1-\g}}\frac{1}{i_2^\g j_2^{1-\g}}\ldots \frac{1}{i_k^\g j_k^{1-\g}}
\end{equation*}
where $M=M(m,\d)$ is a constant that depends only on $m$ and $\d$. 
\end{proposition}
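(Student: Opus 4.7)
The plan is to reduce the statement to the $m=1$ case via the block-contraction construction of $\PA$, then prove the $m=1$ case by induction on $k$, using the attachment rule together with moment estimates on the degree sequence.

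\textbf{Reduction to $m = 1$.} Recall that $\PA$ is obtained from $\text{PA}_{mt}(1, \d/m)$ by contracting the blocks $B_j := \set{v^{(1)}_{(j-1)m+1}, \ldots, v^{(1)}_{jm}}$ into super-vertex $j$, and note that $\g(m,\d) = \g(1, \d/m)$. The event $j_s \to i_s$ in $\PA$ implies the existence of some pair $(a_s, b_s) \in B_{j_s} \times B_{i_s}$ with $v^{(1)}_{a_s} \to v^{(1)}_{b_s}$ in the $m=1$ model. A union bound over the at most $m^{2k}$ such witnesses, combined with $a_s \geq m(j_s-1)+1 \gtrsim m j_s$ and $b_s \gtrsim m i_s$, shows that the $m=1$ version of the statement yields the general case, with all $m$-dependent factors absorbed into the constant $M(m,\d)^k$.

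\textbf{The $m = 1$ case.} Write $\d' = \d/m$ and relabel so that $J_1 \leq \ldots \leq J_k$. We induct on $k$. The base case follows from the attachment identity
\begin{equation*}
\Pr\brac{v^{(1)}_J \to v^{(1)}_I \mid \mathcal{F}_{J-1}} = \frac{D_I(J-1) + \d'}{(J-1)(2+\d') + (1+\d')}
\end{equation*}
together with the standard marginal bound $\E[D_I(J-1) + \d'] \lesssim (J/I)^\g$ (see \cite{Remco}). For the inductive step, condition on $\mathcal{F}_{J_k - 1}$ to obtain
\begin{equation*}
\Pr\brac{\bigcap_{s=1}^k v^{(1)}_{J_s} \to v^{(1)}_{I_s}} \leq \frac{1}{(2+\d')(J_k-1)}\,\E\brac{(D_{I_k}(J_k - 1) + \d')\prod_{s<k}\ind{v^{(1)}_{J_s} \to v^{(1)}_{I_s}}},
\end{equation*}
and it suffices to bound the right-hand expectation by a constant times $(J_k/I_k)^\g$ times the inductive bound on the $(k-1)$-fold intersection.

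\textbf{Main obstacle.} The delicate step is controlling this joint expectation: the degrees $D_{I_s}(J_s - 1)$ at distinct vertices are positively correlated, so one cannot naively factor the expectation as a product of marginals. The cleanest remedy is to use the Polya-urn representation of $\text{PA}_t(1, \d')$, in which each $D_i(t) + \d'$ can be written as $\phi_i(t) Z_i(t)$ for a deterministic $\phi_i(t)$ of order $(t/i)^\g$ and a nonnegative martingale $Z_i(t)$; the joint moments of these martingales telescope into ratios of Gamma functions which, via Stirling, produce exactly the required product $\prod_s I_s^{-\g} J_s^{\g-1}$ with all discrepancies uniformly $O(1)$ per factor. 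A direct inductive route via iterated conditioning also works, provided one keeps careful bookkeeping to prevent the constants from deteriorating with $k$.
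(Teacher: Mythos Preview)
The paper does not prove this proposition at all: it is stated with the remark ``which is proved in \cite{PrefPerc} (and for the case $k=1$ was given in \cite{Remco})'' and then used as a black box. So there is no in-paper argument to compare your proposal against.

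On the substance of your sketch: the reduction to $m=1$ via block contraction is correct and is indeed how one passes between the two models (the $m^{2k}$ union bound and the identity $\g(m,\d)=\g(1,\d/m)$ are the right ingredients, and the crude inequality $a_s \geq j_s$, $b_s \geq i_s$ suffices to absorb all discrepancies into the constant). The base case $k=1$ is precisely the expected-degree bound $\E[D_I(J{-}1)+\d'] \lesssim (J/I)^\g$ from \cite{Remco}, so that step is fine. You also correctly isolate the one nontrivial point: the positive correlation of degrees means the joint expectation in the inductive step does not factor.

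The gap is that you name two possible resolutions (the P\'olya-urn martingale representation, and iterated conditioning with bookkeeping) but carry out neither. The sentence ``the joint moments of these martingales telescope into ratios of Gamma functions which, via Stirling, produce exactly the required product'' is an assertion of what one hopes to find, not a demonstration; the actual content of the proof in \cite{PrefPerc} is precisely verifying that the per-step multiplicative constant stays uniformly bounded independently of $k$, and this requires writing out the recursion and controlling the Gamma-ratio estimates explicitly. As it stands, your proposal is a sound outline of the standard proof architecture, but the step you flag as the ``main obstacle'' is left unexecuted.
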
 

Below, we also use the fact that $ \frac{1}{i^\g j^{1-\g}} \leq \frac{1}{(ij)^{\frac{1}{2}}}$ when $\d \geq0$ and $i\leq j$. A similar counting approach was used in \cite{ColinCovertime}.

For a vertex $v$ define $\mathcal{B}(v,r)$ to be the $r$ ball of $v$ in $\PA$, the subgraph within distance $r$. 

\begin{lemma}\label{CPCLemma}
With high probability, every vertex $v>\k$ has the following property: $\mathcal{B}(v,\om)$ contains at most one cycle consisting entirely of light vertices.
\end{lemma}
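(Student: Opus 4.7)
The plan is a first-moment argument using Proposition \ref{structureProp}. If some light vertex $v$ had two distinct light cycles $C_1, C_2$ in $\mathcal{B}(v, \om)$, there would exist simple paths $P_1, P_2$ in $\PA$ of length at most $\om$ from $v$ to some vertex of $C_1$, $C_2$ respectively (possibly through heavy vertices). Set $H := C_1 \cup C_2 \cup P_1 \cup P_2$. Then $H$ is connected, contains $v$, has $n := |V(H)| \leq 6\om + 2$ vertices, at least $n+1$ edges (cyclomatic number $\geq 2$), and every vertex has degree at least $2$ in $H$. The key structural observation is that, writing $d_h$ for the degree of $h$ in $H$,
\[
s_\ell \;:=\; \sum_{\substack{h \in V(C_1) \cup V(C_2) \\ d_h \geq 3}} (d_h - 2) \;\geq\; 2,
\]
because either the two cycles share a vertex $w$ (which then has $d_w \geq 4$), or they are vertex-disjoint and each must host a vertex of degree $\geq 3$ serving as its junction with the rest of $H$.

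I would enumerate abstract ``shapes'' of $H$ (isomorphism types together with a distinguished vertex playing the role of $v$), of which there are at most $2^{O(\om \log \om)}$ since $n = O(\om)$, and bound the expected number of labellings of each shape that realise as a subgraph of $\PA$. Proposition \ref{structureProp}, combined with $(i^\g j^{1-\g})^{-1} \leq (ij)^{-1/2}$ (valid for $\d \geq 0$ and $i \leq j$), gives for any fixed labeling
\[
\Pr(H \subseteq \PA) \;\leq\; M^{|E(H)|} \prod_{\{i,j\} \in E(H)} (ij)^{-1/2}.
\]
Summing over labellings with $v$ and all cycle-vertices constrained to lie in $(\k, t]$ factorises by vertex:
\[
\sum_{\text{labellings}} \Pr(H \subseteq \PA) \;\leq\; M^{|E(H)|} \prod_{h \in V(H)} \sum_{i \in R_h} i^{-d_h/2},
\]
where $R_h = (\k, t]$ for light-forced $h$ and $R_h \subseteq [t]$ otherwise. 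The per-vertex factor is $O(\log t)$ when $d_h = 2$, and $O(\k^{1 - d_h/2})$ when $d_h \geq 3$ and $h$ is forced light; combined with $s_\ell \geq 2$, the high-degree factors aggregate to at most $\k^{-1}$.

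Consequently each shape contributes at most $(C \log t)^n \k^{-1}$, and substituting $n \leq 6\om + 2$, $\k = (\log t)^{7\om}$, the total over all shapes is at most $2^{O(\om \log \om)} (\log t)^{-\om + O(1)} = o(1)$ for $\om = A \log \log t$ with $A$ sufficiently large. The main obstacle I anticipate is the structural case analysis establishing $s_\ell \geq 2$: this must be carried out uniformly across the topologies $H$ can exhibit---two disjoint cycles bridged by a path, cycles sharing a vertex or an arc, $v$ lying on one or both cycles, and the paths $P_1, P_2$ possibly meeting each other internally or re-entering the cycles---as well as across the degenerate ``cycles'' formed by self-loops or length-two multi-edges inherent to the preferential attachment construction. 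Once that is settled, the probabilistic counting is essentially routine.
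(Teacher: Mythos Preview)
Your approach is the same first-moment calculation via Proposition~\ref{structureProp} that the paper uses. The paper, however, works with three concrete ``bad'' configurations rather than abstract shapes: two light cycles joined by a light path (CPC), two light cycles sharing a single vertex, and a light cycle with a light chord. For each it writes out the sum explicitly and obtains a bound of the form $(\log t)^{O(\om)}/\k = o(1)$. In particular the paper drops $v$ from the structure entirely and insists that every vertex of the configuration be light, so no bookkeeping of ``light-forced'' versus unrestricted ranges is needed.

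Your route via shape enumeration and the inequality $s_\ell \geq 2$ is slightly more general: it accommodates the possibility that the only path in $\mathcal{B}(v,\om)$ between the two light cycles passes through heavy vertices, a case the paper's explicit enumeration does not cover (though this only matters if one reads the lemma literally rather than as a statement about the truncated ball used in Corollary~\ref{lightBallCor}). What you give up is directness: the paper never has to count isomorphism types or verify that the $2^{O(\om\log\om)}$ shape count is swallowed by $(\log t)^{\om}$. One small correction to your setup: $v$ need not have degree at least $2$ in $H$---if $P_1$ and $P_2$ share their first edge and $v$ lies on neither cycle, then $v$ is a leaf of $H$---so either prune leaves before applying the degree-product bound, or simply omit $v$ and connect the two cycles by a single path of length at most $2\om$, as the paper effectively does.
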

\begin{proof}
Define a \emph{cycle-path-cycle} (CPC) structure as a pair of cycles connected by a path. We consider CPC structures where the cycles and paths are short, that is, cycles have sizes $1 \leq r,s \leq 2\om+1$, and the path has length $0 \leq \ell \leq \om$. Note $r=1$ denotes a self-loop and $r=2$ denotes a pair of parallel edges between two vertices.

We denote by $a_1,\ldots,a_r$ and $b_1, \ldots, b_s$ the vertices of the cycles, and $c_0, \ldots, c_\ell$ the vertices of the path. Without
loss of generality, we may assume $a_1=c_0$ and $b_1=c_\ell$. Thus, the structure has $r+s+\ell$ edges and $r+s+\ell-1$ vertices. 

Applying Proposition \ref{structureProp},  the expected number of such structures lying entirely in $[t]\setminus[\k]$ is bounded by
\begin{align*}
& \sum_{r=1}^{2\om+1}\sum_{\ell=0}^{\om-1}\sum_{s=1}^{2\om+1}\sum_{\k< a_1, \ldots, a_r}\sum_{\k< b_1, \ldots, b_s}\sum_{\k<c_1, \ldots, c_{\ell-1}}\frac{M^{r+s+\ell}}{(a_1b_1)^{3/2}}\prod_{i=2}^r\frac{1}{a_i}\prod_{j=2}^s\frac{1}{b_j}\prod_{k=1}^{\ell-1}\frac{1}{c_k}\\
&\lesssim \brac{\int_\k^t x^{-3/2}\,\mathrm{d}x}^2 \, \sum_{r=1}^{2\om+1}\sum_{\ell=0}^{\om-1}\sum_{s=1}^{2\om+1}(M\log t)^{r+s+\ell}\\
& \lesssim \frac{(\log t)^{6\omega}}{\k}=o(1).
\end{align*}
The rest of the proof is of a similar nature and is continued in the appendix.
\qed
\end{proof}

\begin{lemma}\label{typicalStructureLemma}
With high probability, the following hold for all $v>\k_0$:   
\begin{description}
\item[(i)] $v$ has at most $2$ edges on short paths into $[\k]$. 
\item[(ii)] If $v$ is on a short light cycle, then $v$ has no edge that is on a short (light) path into $[\k]$ but that is not part of the cycle.
\item[(iii)] If $v$ is connected to a short light cycle $C$ by a short light path $P$, then $v$ has at most one edge $e$ such that $e$ is on a short path into $[\k]$ but $e \notin P$.
\end{description}
\end{lemma}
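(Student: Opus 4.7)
The plan is a first-moment argument paralleling Lemma \ref{CPCLemma}: for each of (i), (ii), (iii) I identify the forbidden substructure rooted at $v > \k_o$, use Proposition \ref{structureProp} to bound its expected count summed over $v$, and show this expectation is $o(1)$. Markov's inequality together with a union bound over the three parts then gives the claim \whp.

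The key structural observation is that every bad configuration contains exactly three distinct edges incident to $v$: three first-edges of paths into $[\k]$ in (i); the two cycle-edges at $v$ (or one self-loop, whose Proposition \ref{structureProp} contribution at $v$ matches that of two cycle-edges) together with the first edge of the extra path into $[\k]$ in (ii); and the first edge of $P$ together with $e_1$ and $e_2$ in (iii). Applying Proposition \ref{structureProp} edge by edge and using $\frac{1}{i^{\g} j^{1-\g}} \leq \frac{1}{(ij)^{1/2}}$ (valid since $\d \geq 0$), each edge incident to $v$ contributes a factor $v^{-1/2}$, so $v$ yields a total factor $v^{-3/2}$. Consequently $\sum_{v > \k_o} v^{-3/2} \lesssim \k_o^{-1/2} = (\log t)^{-999\om/2}$, and this is the main gain of the argument.

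The remaining ingredients are handled exactly as in Lemma \ref{CPCLemma}. Each non-endpoint intermediate vertex of the bad structure appears in two edges and contributes $\sum_u u^{-1} \lesssim \log t$, while each heavy endpoint in $[\k]$ contributes $\sum_{h \leq \k} h^{-1/2} \lesssim \k^{1/2} = (\log t)^{7\om/2}$. Since any bad structure has $O(\om)$ edges and at most three heavy endpoints, and since there are only $\om^{O(1)}$ choices of length parameters, the combined polylogarithmic prefactor is at most $(\log t)^{c\om}$ for a fixed constant $c$ (independent of $A$ and $t$). This is dwarfed by the factor $(\log t)^{-999\om/2}$ coming from summing over $v > \k_o$, so the expected number of vertices violating any of (i)--(iii) is $o(1)$.

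I expect the main obstacle to be the bookkeeping of degenerate and overlapping subcases: for example, in (iii) the two short paths into $[\k]$ may share intermediate vertices with each other, with $P$, or with $C$; in (ii) the path into $[\k]$ may meet $C$ at a vertex other than $v$; and cycles of length $1$ or $2$ (self-loops and parallel edges) require separate counting. In each such degeneracy the underlying set of distinct edges is no larger than in the generic pattern, so the corresponding expected count is bounded by, or smaller than, the generic estimate above. Crucially, the three distinct edges incident to $v$ and the resulting factor $v^{-3/2}$ survive in every case, so the outlined argument is robust. This is precisely the kind of bookkeeping that is carried out implicitly in the appendix continuation of the proof of Lemma \ref{CPCLemma}.
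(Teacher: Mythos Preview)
Your proposal is correct and follows essentially the same first-moment strategy as the paper: in each part the forbidden configuration forces three edges at $v$, hence a factor $v^{-3/2}$ from Proposition~\ref{structureProp}, and summing over $v>\k_o$ supplies a factor $\k_o^{-1/2}$ that swamps all remaining polylogarithmic contributions. The only cosmetic difference is that the paper bounds each heavy endpoint by the crude count $\k$ (so the prefactors in (i), (ii), (iii) are $\k^3,\k,\k^2$), whereas you sum the weight $h^{-1/2}$ over $h\in[\k]$ to get $\k^{1/2}$ per endpoint; either bound is fine for the $o(1)$ conclusion.
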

\begin{proof}
\textbf{(i)} Suppose $v$ has three edges $e_1, e_2, e_3$ (possibly parallel) on short paths to $[\k]$ to vertices $i_1, i_2, i_3 \in [\k]$ (not necessarily distinct).  
Then there is a minimal structure $S$ which contains $v, e_1, e_2, e_3, i_1, i_2, i_3$, and a short path from $v$ to $[\k]$ via each edge $e_1, e_2, e_3$. Since $S$ is minimal, there are $0 \leq r \leq 3(\om-1)$ light vertices $a_1, \ldots, a_r$ in $S$ which form the short paths from $v$ to $[\k]$ via $e_1, e_2, e_3$. Also, since $S$ is minimal, it contains at most $3\om$ edges. To consider two extremes, for example, $S$ might be three non-intersecting paths, or a single path with $e_1, e_2, e_3$ being parallel and all other vertices connected connecting by non-parallel edges. 

Observe that each vertex $a_i$ has at least two edges, meaning in the application of Proposition \ref{structureProp} it incurs a fraction $\frac{1}{a_i}$ or less. Applying Proposition \ref{structureProp}, the expected number of structures $S$ is asymptotically bounded from above by
\begin{equation*}
\k^3 M^{3\om}\sum_{r=0}^{3\om}\sum_{\k<a_1, \ldots, a_r}\frac{1}{v^{3/2}}\prod_{i=1}^r\frac{1}{a_i} \lesssim \frac{3\om\k^3 (M\log t)^{3\om}}{v^{3/2}}.
\end{equation*}
Hence, taken over all $v>\k_o$, this is $O\bfrac{3\om\k^3 (M\log t)^{3\om}}{\k_o^{1/2}}=o(1)$. 

The rest of the proof is of a similar nature and is continued in the appendix.
\qed
\end{proof}

We define the \emph{truncated $r$-ball around $v$}, denoted by $\widetilde{\mathcal{B}}(v,r)$, as follows:  

\begin{description}
\item[1.] Delete from $\mathcal{B}(v,r)$ all edges incident to vertices in $[\k]$, denote by $\mathcal{B}_-(v,r)$ the resulting graph.
\item[2.] Let $\mathcal{C}_v(\mathcal{B}_-(v,r))$ be the connected component in $\mathcal{B}_-(v,r)$ that contains $v$. Add  to $\mathcal{C}_v(\mathcal{B}_-(v,r))$ all edges $(u,v)$ deleted in the previous step such that $u \in [\k]$ and $v \in \mathcal{C}_v(\mathcal{B}_-(v,r))$. The resulting graph is $\widetilde{\mathcal{B}}(v,r)$. 
\end{description}

The following is a corollary of the above.

\begin{corollary}\label{lightBallCor}
With high probability, for every vertex $v > \k_o$, $\widetilde{\mathcal{B}}(v,\om)$ belongs to one of the following categories: 
\begin{description}
\item[(i)] $\widetilde{\mathcal{B}}(v,\om)$ is a tree and all vertices are light.
\item[(ii)] $\widetilde{\mathcal{B}}(v,\om)$ has no cycles and one or two heavy vertices.
\item[(iii)] In $\widetilde{\mathcal{B}}(v,\om)$, $v$ is part of a short cycle of light vertices, and any heavy vertex in $\widetilde{\mathcal{B}}(v,\om)$ only connects to $v$ 
via edges  that are part of that cycle.
\item[(iv)] In $\widetilde{\mathcal{B}}(v,\om)$, there is a short cycle of light vertices which $v$ is not part of, which connects to $v$ through a short path $P$, and there is at most one edge $e$ on a path from from $v$ to a heavy vertex in $\widetilde{\mathcal{B}}(v,\om)$ such that $e$ is not part of $P$.
\end{description}
\end{corollary}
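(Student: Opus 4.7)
The plan is to derive the corollary by a direct case analysis on top of Lemmas \ref{CPCLemma} and \ref{typicalStructureLemma}. Fix $v > \k_o$ and condition on the whp event that both lemmas hold at $v$ (note $\k_o > \k$, so Lemma \ref{CPCLemma} applies). Write $\mathcal{C}_v := \mathcal{C}_v(\mathcal{B}_-(v,\om))$ for the ``light part'' of $\widetilde{\mathcal{B}}(v,\om)$. Any cycle in $\mathcal{C}_v$ sits entirely inside $\mathcal{B}(v,\om)$ on light vertices and has length at most $2\om+1$, so by Lemma \ref{CPCLemma} $\mathcal{C}_v$ contains at most one such cycle, and if it exists it is short.

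In the first top-level case, $\mathcal{C}_v$ is a tree. If no heavy vertex is adjacent to $\mathcal{C}_v$ in $\mathcal{B}(v,\om)$, nothing is re-added in the definition of $\widetilde{\mathcal{B}}(v,\om)$, so $\widetilde{\mathcal{B}}(v,\om)=\mathcal{C}_v$ is a light tree, yielding category (i). Otherwise every heavy vertex $h$ of $\widetilde{\mathcal{B}}(v,\om)$ is a pendant attached to $\mathcal{C}_v$, and it is reachable from $v$ via a short path in $\mathcal{B}(v,\om)$; this path starts with an edge of $v$ that lies on a short path into $[\k]$. Lemma \ref{typicalStructureLemma}(i) bounds such edges of $v$ by two, and combining this with the tree structure of $\mathcal{C}_v$ yields both the absence of cycles and the ``one or two heavy vertices'' bound of category (ii).

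In the second top-level case, $\mathcal{C}_v$ contains a unique short light cycle $C$. If $v \in C$, Lemma \ref{typicalStructureLemma}(ii) forbids $v$ from having any edge on a short path into $[\k]$ that is not part of $C$; since every heavy vertex of $\widetilde{\mathcal{B}}(v,\om)$ is reached from $v$ along such a short path, every edge connecting $v$ to a heavy vertex must lie on $C$, giving category (iii). If $v \notin C$, the ``tree plus one cycle'' shape of $\mathcal{C}_v$ gives a unique shortest light path $P$ of length at most $\om$ from $v$ to $C$, and Lemma \ref{typicalStructureLemma}(iii) bounds to one the number of edges $e \notin P$ incident to $v$ that lie on a short path to $[\k]$. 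This is exactly category (iv).

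The main obstacle I expect is tightening category (ii): Lemma \ref{typicalStructureLemma}(i) bounds \emph{edges of $v$} rather than heavy vertices directly, so one must rule out both (a) three distinct heavy pendants reached through the at-most-two ``exit edges'' of $v$ in the tree $\mathcal{C}_v$, and (b) a pendant heavy vertex carrying two edges back into $\mathcal{C}_v$ (which would create a cycle passing through a heavy vertex, not excluded by Lemma \ref{CPCLemma}). Both are handled by noting that such configurations force two internally disjoint short light paths from $v$ into $[\k]$, contradicting Lemma \ref{typicalStructureLemma}(i) once one accounts for the edge of $v$ used by each disjoint path. Everything else is a transcription of the structural lemmas into the four categories.
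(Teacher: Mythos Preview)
Your case analysis is exactly the derivation the paper has in mind (it offers no proof beyond ``the following is a corollary of the above''), and the splitting into ``$\mathcal{C}_v$ is a tree'' versus ``$\mathcal{C}_v$ is unicyclic'', followed by the sub-cases, is the right skeleton.

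The weak spot is the last paragraph. Your proposed fix for (a) and (b) does not work as stated. For (b), take a heavy $h\in[\k]$ with two edges to $w_1,w_2\in\mathcal{C}_v$ where $w_1,w_2$ lie in the \emph{same} subtree of $\mathcal{C}_v$ below a single child of $v$; the two $v\to h$ paths share their initial segment, so they are not internally disjoint and they use only \emph{one} edge of $v$. Lemma~\ref{typicalStructureLemma}(i) is not violated, yet $\widetilde{\mathcal{B}}(v,\om)$ acquires a cycle through $h$ and falls outside all four categories. The same issue arises for (a): three heavy pendants can hang off the same branch of $\mathcal{C}_v$, again using a single edge of $v$; and even if you could force two internally disjoint paths, that only gives two edges of $v$ on short paths into $[\k]$, which is precisely what (i) permits, not a contradiction. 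What actually rules these configurations out is the same first-moment machinery as in the proofs of Lemmas~\ref{CPCLemma} and~\ref{typicalStructureLemma}: a heavy vertex with two edges into the light component, or a light branch vertex $b$ with three edge-disjoint short routes into $[\k]$, gives a structure whose expected count is $o(1)$ by Proposition~\ref{structureProp} (apply (i) at the branch vertex $b$ when $b>\k_o$, and do a direct count otherwise). The paper sweeps this under ``corollary'', so you are not missing an idea that the paper supplies; but your stated resolution is not the right one. A smaller point: your assertion that any cycle in $\mathcal{C}_v$ has length at most $2\om+1$ also needs an argument---membership in $\mathcal{B}(v,\om)$ bounds distances in $\text{PA}_t$, not in $\mathcal{C}_v$, so you should use the unicyclic structure of $\mathcal{C}_v$ (from Lemma~\ref{CPCLemma}) to bound the cycle length.
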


\subsubsection*{Degree of outer-core vertices}

For $i\in [t]$ consider the vertex $i$ and the core $[i]$. Immediately after the vertex $i$ is added,  the graph under construction at that point, ${\text{PA}_{i}(m,\d)}$, has total degree $2mi$, and $D_{i}(i)$ is a random variable taking integral value between $m$ and $2m$. We may ask, given $D_{i}(i)=a$, what is the probability that $D_i(t)=a+d$? The question can be framed as one about a Polya urn process in which the urn initially contains $a$ red balls and $2mi-a$ black balls, and the selection process has weighting functions $W_R(k)=k+\d$ and 
$W_B(k)=k-(i-1)\d$ for red and black balls respectively (see, e.g., \cite{Remco}).   

Notation: $S_i(t)$ is the sum of degrees of vertices in $[i]$ in $\PA$. The following was shown in \cite{PrefPerc}:

\begin{lemma}\label{PolyaLemma} 
There is a constant $C(m,\d)$ that depends only on $m$ and $\d$, such that for $1 \leq d \leq n \leq m(t-i)$,
\begin{equation*}
\Pr\brac{D_i(t)=a+d \mid S_i(t)-2mi=n, D_i(i)=a} \leq C(m,\d)  \frac{1}{d}\bfrac{Id}{I+n-d}^{a+\d}e^{-\frac{dI}{I+n}} 
\end{equation*}
and
\begin{equation*}
\Pr(X_R(n,a)=0) \leq  \bfrac{I}{I+n}^{a+\d},
\end{equation*}
where $I=I(i,m,\d)=i(2m+\d)-1$. 
\end{lemma}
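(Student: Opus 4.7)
The plan is to realize the growth of $D_i$ over the final $n$ half-edges attaching to $[i]$ as a two-colour Polya urn, use the exact Beta-Binomial expression for the conditional distribution, and obtain the claimed bounds by Gamma-function estimates.

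Condition throughout on $D_i(i)=a$ and $S_i(t) - 2mi = n$. At any intermediate step $\tau > i$, given that the incoming half-edge attaches to some vertex of $[i]$, the preferential attachment rule picks vertex $i$ with probability $(D_i(\tau)+\delta)/(S_i(\tau)+i\delta)$. This is the classical two-colour Polya urn with initial red weight $a+\delta$, initial black weight $(2mi - a) + (i-1)\delta$, one unit of weight added to the colour drawn on each step, and total weight $I+1+j$ after $j$ draws. Exchangeability of the drawing sequence gives the Beta-Binomial expression
\begin{equation*}
\Pr(X_R(n,a)=d) = \binom{n}{d}\,\frac{(a+\delta)^{(d)}\,\bigl((I+1)-(a+\delta)\bigr)^{(n-d)}}{(I+1)^{(n)}},
\end{equation*}
where $x^{(k)}=\Gamma(x+k)/\Gamma(x)$ is the rising factorial. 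The second inequality of the lemma is then immediate from the product representation
\begin{equation*}
\Pr(X_R(n,a)=0) = \prod_{j=0}^{n-1}\!\left(1-\frac{a+\delta}{I+1+j}\right) \leq \exp\!\left(-(a+\delta)\log\tfrac{I+n}{I}\right) = \left(\frac{I}{I+n}\right)^{a+\delta},
\end{equation*}
using $1-x\leq e^{-x}$ and $\sum_{j=0}^{n-1}(I+1+j)^{-1} \geq \log((I+n)/I)$.

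For the first bound, I would split the Beta-Binomial into three factors. The ratio $\bigl((I+1)-(a+\delta)\bigr)^{(n-d)}/(I+1)^{(n-d)} = \prod_{j=0}^{n-d-1}\bigl(1 - (a+\delta)/(I+1+j)\bigr)$ contributes $(I/(I+n-d))^{a+\delta}$ by the zero-case argument above. The combinatorial factor $\binom{n}{d}/(I+n-d+1)^{(d)}$ can be rewritten by pairing terms as $(1/d!)\prod_{j=0}^{d-1}(n-j)/(I+n-j)$; since each factor equals $1 - I/(I+n-j) \leq 1 - I/(I+n)$, the product is at most $(1 - I/(I+n))^d/d! \leq e^{-dI/(I+n)}/d!$. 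Finally, Stirling's formula gives $(a+\delta)^{(d)}/d! = \Theta(d^{a+\delta-1}/\Gamma(a+\delta))$, yielding the $1/d$ and the $d^{a+\delta}$ in the target. Multiplying and absorbing all $a,\delta$-dependent constants into $C(m,\delta)$ (using that $m\leq a\leq 2m$) yields exactly $\tfrac{1}{d}(Id/(I+n-d))^{a+\delta}e^{-dI/(I+n)}$. The main obstacle is recognising the clean pairing of terms in the combinatorial factor, which gives the exact exponential $e^{-dI/(I+n)}$; the obvious separate bounds on $\binom{n}{d}$ and $(I+n-d+1)^{(d)}$ only yield the weaker $e^{-d(I-d)/(I+n-d)}$, which is insufficient.
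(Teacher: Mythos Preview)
The paper does not prove this lemma at all; it simply quotes it from~\cite{PrefPerc}. So there is no in-paper argument to compare against, and your Polya-urn/Beta--Binomial route is the natural one and presumably the one used there.

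Your treatment of the first inequality is essentially correct. The pairing
\[
\frac{\binom{n}{d}}{(I+n-d+1)^{(d)}}=\frac{1}{d!}\prod_{j=0}^{d-1}\frac{n-j}{I+n-j}\le \frac{1}{d!}\Bigl(1-\frac{I}{I+n}\Bigr)^{d}\le \frac{1}{d!}\,e^{-dI/(I+n)}
\]
is exactly the right manoeuvre, and the Stirling step $(a+\delta)^{(d)}/d!\le C\,d^{a+\delta-1}$ is fine since $m\le a\le 2m$ lets the constant depend only on $m,\delta$.

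There is, however, a genuine slip in your proof of the second inequality. You assert
\[
\sum_{j=0}^{n-1}\frac{1}{I+1+j}\ \geq\ \log\frac{I+n}{I},
\]
but this is the \emph{wrong direction}: the left-hand side is a right-endpoint Riemann sum of the decreasing function $1/x$ over $[I,I+n]$, hence is at most $\log\tfrac{I+n}{I}$ (for instance $I=n=1$ gives $1/2<\log 2$). Your argument therefore only delivers $\bigl(\tfrac{I+1}{I+1+n}\bigr)^{a+\delta}$, which is strictly weaker than the stated $\bigl(\tfrac{I}{I+n}\bigr)^{a+\delta}$. For the \emph{first} inequality this gap is harmless --- the ratio is at most $(1+1/I)^{a+\delta}\le e^{(2m+\delta)/(2m+\delta-1)}$, which is absorbed into $C(m,\delta)$ --- but the \emph{second} inequality has no constant to hide behind.

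A clean repair: for $c=1$ the product telescopes exactly,
\[
\prod_{j=0}^{n-1}\Bigl(1-\frac{1}{I+1+j}\Bigr)=\prod_{j=0}^{n-1}\frac{I+j}{I+1+j}=\frac{I}{I+n},
\]
and the function $c\mapsto\sum_{j}\log\bigl(1-\tfrac{c}{I+1+j}\bigr)-c\log\tfrac{I}{I+n}$ is concave with zeros at $c=0$ and $c=1$, hence nonpositive for all $c\ge 1$. Since $a+\delta\ge m\ge 1$ in the regime $\delta\ge 0$ used throughout the paper, this recovers the stated bound.
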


Furthermore, the following was also given in \cite{PrefPerc}:
\begin{lemma}\label{sumConcLemma}
Suppose $\d \geq 0$ and for a vertex $i \in [t]$, $i=i(t) \rightarrow \infty$. There exists a constant $K_0>0$ that depends only on $m$ and $\d$, such that the following holds for any constant $K>K_0$ and $h$ which is smaller than a constant that depends only on $m, \d$,
\begin{equation*}
\Pr\brac{S_i(t) < \frac{1}{K}\E[S_i(t)]}  \leq e^{-hi}.
\end{equation*}
\end{lemma}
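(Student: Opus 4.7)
I would prove the lemma by constructing a martingale associated to $S_i(\cdot)$ and applying Freedman's inequality, exploiting the fact that the conditional variance of the increments is much smaller than their worst-case size. Using the preferential attachment rule, each of the $m$ edges added at step $s+1$ lands in $[i]$ with probability approximately $(S_i(s)+i\d)/((2m+\d)s)$, so with $\g = 1/(2+\d/m)$,
\begin{equation*}
\E[S_i(s+1)+i\d \mid \mathcal{F}_s] = \brac{1 + \frac{\g}{s} + O(1/s^2)}(S_i(s)+i\d).
\end{equation*}
Setting $\phi(s) := \prod_{r=i}^{s-1}(1+\g/r)^{-1} \asymp (i/s)^\g$ and $M_s := \phi(s)(S_i(s)+i\d)$ produces a non-negative martingale (after absorbing the lower-order correction into $\phi$) with deterministic initial value $M_i = \Theta(i)$; this also confirms $\E[S_i(s)] \asymp i^{1-\g} s^\g$.

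Next I would control the increments. Writing $\D_s := M_{s+1}-M_s$, the bound $|\D_s| \leq C_1 m \phi(s+1) \leq C_1 m$ is immediate, while the (approximately) Binomial character of $S_i(s+1)-S_i(s)$ gives
\begin{equation*}
\E[\D_s^2 \mid \mathcal{F}_s] \leq C_2\, \phi(s+1)^2 \frac{S_i(s)+i\d}{s} = C_2\, \phi(s+1)\frac{M_s}{s}.
\end{equation*}
On the event $\{M_r \leq 2 M_i \text{ for all } i \leq r \leq t\}$, the predictable quadratic variation $V_t := \sum_{s=i}^{t-1}\E[\D_s^2 \mid \mathcal{F}_s]$ is at most $2 C_2 M_i \sum_{s\geq i}\phi(s)/s$. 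The crucial cancellation is $\phi(s)/s \asymp i^{\g}s^{-1-\g}$, so $\sum_{s\geq i}\phi(s)/s = O(1)$, giving $V_t = O(M_i) = O(i)$.

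Finally I would apply Freedman's inequality to the martingale stopped at $\tau := \min\{s : M_s \notin [M_i/(2K), 2M_i]\}$. Taking the downward deviation $\lambda := (1-1/K)M_i = \Theta(i)$ (so that $M_t \leq M_i - \lambda$ corresponds, after undoing $\phi$, to $S_i(t) \leq \E[S_i(t)]/K$) gives
\begin{equation*}
\Pr(M_{t\wedge\tau} \leq M_i - \lambda) \leq \exp\!\brac{-\frac{\lambda^2}{2V_t + 2 C_1 m\lambda/3}} \leq e^{-h i},
\end{equation*}
for a constant $h = h(m,\d) > 0$, provided $K > K_0(m,\d)$ is large enough that $(1-1/K)^2$ stays bounded away from zero. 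The upward-exit event $\{\exists s \leq t: M_s > 2 M_i\}$ is handled by Doob's maximal inequality applied to the non-negative martingale $M$. The main obstacle is precisely this stopping argument: a plain Azuma--Hoeffding bound is insufficient because $\sum_s |\D_s|^2$ can be $\Theta(i\log t)$ when $\d=0$ and polynomially larger when $\d>0$, yielding at best $e^{-\Omega(i/\log t)}$; Freedman's refinement, which uses the predictable quadratic variation rather than the worst-case differences, is essential for genuine exponential-in-$i$ decay.
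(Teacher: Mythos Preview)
The paper does not prove this lemma; it is quoted verbatim from~\cite{PrefPerc} with no argument given here, so there is no in-paper proof to compare against. Your martingale-plus-Freedman outline is a reasonable and standard route to a statement of this type, and the computations of the drift, the increment bound $|\D_s|\leq C\phi(s+1)$, the conditional variance $\E[\D_s^2\mid\mathcal F_s]\lesssim \phi(s+1)M_s/s$, and the key summability $\sum_{s\geq i}\phi(s)/s=O(1)$ are all correct.

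There is, however, a genuine gap in the last sentence. Doob's maximal inequality applied to the non-negative martingale $M$ gives only $\Pr(\sup_{s\leq t} M_s>2M_i)\leq \E[M_t]/(2M_i)=1/2$, which is a constant and nowhere near $e^{-hi}$. On the event that the stopped process exits through the \emph{upper} barrier at some $\tau\leq t$, you have $M_{t\wedge\tau}=M_\tau>2M_i$, so your Freedman bound on $\{M_{t\wedge\tau}\leq M_i-\lambda\}$ says nothing at all about what happens afterwards; the unstopped martingale may still descend below $M_i/K$ by time $t$, and that contribution to $\Pr(M_t<M_i/K)$ is simply not controlled by your argument.

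The fix is an iteration rather than Doob. Conditional on an upward exit at time $\tau$ with $M_\tau\geq 2M_i$, restart the whole argument from $\tau$ with initial value $M_\tau$ and a new upper barrier at $4M_i$; the quadratic variation accumulated before the next exit is now $O(M_i)$ as before, while the drop required to reach $M_i/K$ is at least $(2-1/K)M_i$, so Freedman gives an even smaller probability. Repeating with thresholds $2^jM_i$ produces probabilities $\exp(-c\,2^j i)$ whose sum is dominated by its first term, and you recover $e^{-hi}$. Equivalently, use the form of Freedman that bounds $\Pr(\exists n:\ M_n-M_0\leq -\lambda,\ V_n\leq v)$ and slice according to $\sup_{s\leq t}M_s\in(2^jM_i,2^{j+1}M_i]$. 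Either way, Doob alone does not suffice here.
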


We use these lemmas to prove the following:
\begin{lemma}\label{DegreeLBLemma}
With high probability, for every $i \in [\k_o]$, $D_i(t) \geq \bfrac{t}{\k_o}^\g\frac{1}{\k_o^2}$.  
\end{lemma}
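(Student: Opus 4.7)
The plan is to combine the Polya urn tail bound of Lemma \ref{PolyaLemma} with the cumulative-degree concentration of Lemma \ref{sumConcLemma}, and then union bound over $i \in [\k_o]$. Write $D = \bfrac{t}{\k_o}^\g \frac{1}{\k_o^2} = \frac{t^\g}{\k_o^{2+\g}}$. Standard preferential attachment calculations (see e.g.~\cite{Remco}) give $\E[S_i(t)] = \Theta(t^\g i^{1-\g})$ for $\d \geq 0$, so set $s_i := \frac{1}{K}\E[S_i(t)]$, where $K$ is the constant of Lemma \ref{sumConcLemma}. For each $i \in [\k_o]$ I decompose
\begin{equation*}
\Pr(D_i(t) < D) \leq \Pr(D_i(t) < D,\ S_i(t) \geq s_i) + \Pr(S_i(t) < s_i).
\end{equation*}

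For the first summand, I condition on $S_i(t) - 2mi = n$ with $n + 2mi \geq s_i$ and on $D_i(i) = a \geq m$, and apply Lemma \ref{PolyaLemma} to bound
\begin{equation*}
\Pr(D_i(t) < D \mid n, a) \leq \bfrac{I}{I+n}^{a+\d} + C \sum_{d=1}^{D-a-1} \frac{1}{d}\bfrac{Id}{I+n-d}^{a+\d}.
\end{equation*}
Since $I = i(2m+\d) - 1 \lesssim i$ and $n \gtrsim t^\g i^{1-\g}$, for $d \leq D \ll n$ one has $Id/(I+n-d) \lesssim d\, i^\g /t^\g$. Using $a+\d \geq 1$ and an integral comparison, $\sum_{d=1}^{D-1} d^{a+\d-1} \lesssim D^{a+\d}$, so
\begin{equation*}
\Pr(D_i(t) < D \mid n, a) \lesssim \bfrac{i^\g D}{t^\g}^{a+\d} = \bfrac{i^\g}{\k_o^{2+\g}}^{a+\d} \leq \frac{1}{\k_o^{2(a+\d)}} \leq \frac{C}{\k_o^{2}},
\end{equation*}
using $i \leq \k_o$ and $a+\d \geq m \geq 1$; the self-loop term $(I/(I+n))^{a+\d}$ is absorbed in the same bound. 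Summing over $i \in [\k_o]$ yields $\k_o \cdot O(1/\k_o^2) = O(1/\k_o) = o(1)$.

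For the second summand, Lemma \ref{sumConcLemma} gives $\Pr(S_i(t) < s_i) \leq e^{-hi}$, valid for $i \to \infty$; choosing $i_0 = \log \log t$, the tail $\sum_{i=i_0}^{\k_o} e^{-hi}$ is $o(1)$. The main obstacle is the remaining finite range $i < i_0$, where Lemma \ref{sumConcLemma} does not apply and $S_i(t)$ need not concentrate about its mean. For each such constant-indexed $i$ I invoke the classical fact (see~\cite{Remco}) that $D_i(t)/t^\g$ converges almost surely to a strictly positive random variable $\xi_i$; consequently $\Pr(D_i(t) < t^\g/\k_o^{2+\g}) \to \Pr(\xi_i = 0) = 0$ as $t \to \infty$, and since there are only $O(\log \log t)$ such vertices, a union bound over them contributes $o(1)$. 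Combining the two contributions proves the lemma.
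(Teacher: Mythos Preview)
Your overall strategy---combine Lemma~\ref{PolyaLemma} with Lemma~\ref{sumConcLemma} and then union bound---matches the paper, and your treatment of the first summand (the Polya-urn tail conditional on $S_i(t)\geq s_i$) is essentially the paper's computation. The difference lies in how you organise the union bound over $i$, and there is a genuine gap in your handling of the small-index vertices.

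For $i<i_0=\log\log t$ you write that these are ``constant-indexed'' and appeal to the almost-sure convergence $D_i(t)/t^{\g}\to\xi_i>0$, concluding that a union bound over $O(\log\log t)$ such vertices contributes $o(1)$. This step does not follow. First, the indices $i<i_0$ are \emph{not} constants: $i$ ranges up to $\log\log t-1$, so the classical limit statement for a fixed $i$ does not apply directly to the largest ones in this range. Second, even for a genuinely fixed $i$, almost-sure convergence gives only $\Pr(D_i(t)<D)\to 0$ with no quantitative rate; a union bound over a \emph{growing} number (here $\log\log t$) of events, each of probability $o(1)$, need not be $o(1)$. So as written the argument for small $i$ is invalid.

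The paper sidesteps this difficulty entirely by a stochastic-domination reduction: one checks that $D_{\k_o}(t)$ is stochastically dominated by $D_i(t)$ for every $i\in[\k_o]$, so it suffices to prove the tail bound for the single index $i=\k_o$. Since $\k_o\to\infty$, Lemma~\ref{sumConcLemma} applies cleanly (with $h=\tfrac{\log\k_o}{\k_o}$), and one only needs to run your first-summand computation once, at $i=\k_o$, to obtain $\Pr(D_{\k_o}(t)<D)\lesssim \k_o^{-2(m+\d)}$; the union bound over $[\k_o]$ then costs a single factor of $\k_o$. This both removes the small-$i$ issue and avoids the vertex-by-vertex decomposition altogether. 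Your argument can be repaired by inserting exactly this monotonicity observation in place of the almost-sure-convergence step.
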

\begin{proof}
Letting $h=\frac{\log \k_o}{\k_o}$ in Lemma \ref{sumConcLemma}, we see that for some constant $K$, $S_{\k_o}(t) \geq Kt^\g \k_o^{1-\g}$. Let $z=z(t) \rightarrow \infty$ as $t \rightarrow \infty$ to be determined later. Letting $n=Kt^\g \k_o^{1-\g}-2m\k_o$ and applying  \ref{PolyaLemma},
\begin{align*}
&\Pr\brac{D_{\k_o}(t)\leq \frac{n}{\k_o z} \mid S_{\k_o}(t)-2m\k_o\geq n, D_{\k_o}(\k_o)=a} \\
&\qquad \leq \sum_{d=0}^{n/(\k_o z)} C(m,\d)  \frac{1}{d}\bfrac{Id}{I+n-d}^{a+\d}e^{-\frac{dI}{I+n}} \\
&\qquad \lesssim \bfrac{I}{I+n}^{a+\d}+ \sum_{d=1}^{n/(\k_o z)} \bfrac{I}{I+n-d}^{a+\d}d^{a+\d-1}e^{-\frac{dI}{I+n}}\\
&\qquad \leq\bfrac{I}{I+n}^{a+\d}+  \frac{I^{a+\d}}{(I+n-n/(\k_o z))^{a+\d}}\sum_{d=0}^{n/(\k_o z)} d^{a+\d-1}.
\end{align*}

Since $\k_o \rightarrow \infty$ and $z \rightarrow \infty$ as $t \rightarrow \infty$, we have $n/(\k_o z)=o(n)$, so $\frac{1}{(I+n-n/(\k_o z))^{a+\d}}\lesssim \frac{1}{(I+n)^{a+\d}}$. 

Furthermore,
\begin{equation*}
\sum_{d=0}^{n/(\k_o z)} d^{a+\d-1} \lesssim \int_0^{n/(\k_o z)} x^{a+\d-1}  \,\mathrm{d}x \leq \frac{1}{a+\d}\bfrac{n}{\k_o z}^{a+\d}.
\end{equation*}
Hence,
\begin{align*}
&\Pr\brac{D_{\k_o}(t)\leq \frac{n}{\k_o z} \mid S_{\k_o}(t)-2m\k_o\geq n, D_{\k_o}(\k_o)=a}\\
&\qquad \lesssim \bfrac{I}{I+n}^{a+\d}+ \bfrac{I}{I+n}^{a+\d}\bfrac{n}{\k_o z}^{a+\d}\\
&\qquad \lesssim \bfrac{I}{I+n}^{a+\d}+ \frac{1}{z^{a+\d}}.
\end{align*}
Now, we choose $z(t)=\k_o^2$. Then,
\begin{equation*}
\bfrac{I}{I+n}^{a+\d}=\bfrac{\k_o(2m+\d-1)}{\k_o(2m+\d-1)+  K t^\g \k_o^{1-\g}-2m\k_o}^{a+\d}
\lesssim \bfrac{\k_o}{t}^{\g(a+\d)}
=o\bfrac{1}{z^{a+\d }}
\end{equation*}
since $a \geq m \geq 5$ and $\d \geq  0$.

Thus,
\begin{equation*}
\Pr\brac{D_{\k_o}(t)\leq \frac{n}{\k_o z} \mid S_{\k_o}(t)-2m\k_o\geq n, D_{\k_o}(\k_o)=a} \lesssim \frac{1}{\k_o^{2(m+\d) }}.
\end{equation*}

A simple coupling argument shows that $D_{\k_o}(t)$ is stochastically dominated by $D_{i}(t)$ for any $i \in [\k_o]$. Therefore, taking the union bound over $[\k_o]$ we get 
$\frac{\k_o}{\k_o^{2(m+\d) }}=o(1)$ since $a \geq m \geq 5$ and $\d \geq  0$. 
\qed
\end{proof}

\section{Convergence of the majority dynamics}
In this section we show that the system converges to the initial majority opinion and bound the time it takes. Informally, Lemma \ref{treeConvLemma} shows convergence for a tree when the bias
away from $1/2$ is large enough, Lemma \ref{typicalConvergence} demonstrates for vertices outside the outer core, it only takes a constant number of steps for the probability of being red to get
below the bias threshold that Lemma \ref{treeConvLemma} requires. It also uses the fact that vertices in this range are almost tree-like. The conclusion is that there is a certain contiguous set of steps when all the vertices outside the outer core are blue. Finally, Lemma \ref{coreConvergence} shows that when this happens, the vertices in the outer core are all blue. Since there is a time
step in which all vertices are blue, the graph remains blue thereafter.

For real $p$ and integer $n>3$ define
\begin{equation}
f(n, p)= \left[\left(1+\frac{1}{\sqrt{n-1}}\right)2\right]^{\frac{2}{n-3}}4p(1-p). \label{condFunc}
\end{equation}

The following lemma was essentially first proved by the first author in \cite{LocalMaj}. Due to space restrictions, we give an informal overview here and leave the full proof in the appendix.
\begin{lemma}\label{treeConvLemma}
Let $T_u(h,d^+)$ be a depth-$h$ tree rooted at $u$ where all non-leaf vertices have degree at least $d^+ \geq 5$ odd. Let $p \in (0,\frac{1}{2})$, $k \geq 5$ and $d=k \wedge d^+$. Suppose at time $\t=0$ each vertex of $T_u(h,d^+)$ is assigned red with probability $p$.  Under $\mathcal{MP}^k$ the probability that the root $u$ is red at time step $h$ is at most $\frac{1}{4}\brac{f(d, p)}^{\brac{\frac{d-1}{2}}^h}$.
\end{lemma}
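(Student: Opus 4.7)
I plan to proceed by induction on $h$, letting $q_h$ denote an upper bound on the probability that the root of a depth-$h$ tree (with non-leaf degree at least $d^+$) is red at time $h$ under $\mathcal{MP}^k$, starting from independent initial colouring with red-probability $p$. The base case is $q_0 = p$. For the inductive step, the root $u$, having at least $d^+$ children, polls $d = k \wedge d^+$ of them; its colour at time $h$ is the majority of the polled children's colours at time $h-1$. The subtrees rooted at distinct children of $u$ are vertex-disjoint, so in principle these colours can be treated as independent. The one subtlety is that a non-root non-leaf vertex polls $d$ of its $\geq d^+$ neighbours, a set that includes its parent, so strictly speaking the child's colour at time $h-1$ depends on $u$ itself. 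I would address this by coupling $\mathcal{MP}^k$ to a dominating ``descendants-only'' protocol in which every vertex polls only its children, and showing that under this modification the root's red-probability at time $h$ only increases.

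Under the descendants-only reduction the recursion becomes
\[
q_h \le \Pr\bigl(\text{Bin}(d, q_{h-1}) \ge (d+1)/2\bigr).
\]
I would then bound the upper binomial tail via the Stirling-type estimate $\binom{d}{(d+1)/2} \le 2^{d-1}(1 + 1/\sqrt{d-1})$ together with a geometric-series bound on the tail for $q_{h-1} < 1/2$, yielding an inequality of the form
\[
\Pr\bigl(\text{Bin}(d, q) \ge (d+1)/2\bigr) \le \tfrac{1}{2}\bigl(1 + 1/\sqrt{d-1}\bigr)\bigl(4q(1-q)\bigr)^{(d-1)/2}.
\]

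Setting $\alpha_h := 4 q_h(1-q_h)$ and using $q_h \le \alpha_h/2$, this gives $\alpha_h \le A(d)\,\alpha_{h-1}^{(d-1)/2}$ where $A(d) := 2(1 + 1/\sqrt{d-1})$. Iterating $h$ times and applying the geometric sum $\sum_{i=0}^{h-1}((d-1)/2)^i = \tfrac{2}{d-3}\bigl(((d-1)/2)^h - 1\bigr)$ yields
\[
\alpha_h \le A(d)^{-2/(d-3)}\bigl[A(d)^{2/(d-3)} \cdot 4p(1-p)\bigr]^{((d-1)/2)^h} = A(d)^{-2/(d-3)}\, f(d,p)^{((d-1)/2)^h},
\]
exactly reproducing the structure of $f(d,p)$. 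The factor $\tfrac{1}{4}$ in the lemma then follows from $q_h \le \alpha_h/2$ combined with the inequality $A(d)^{-2/(d-3)} \le 1/2$, which holds directly for $d = 5$ and for larger $d$ via a sharper form of the binomial tail bound exploiting that $q_h$ stays bounded away from $1/2$.

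The main obstacle will be establishing the descendants-only reduction rigorously; once that coupling is in place, the binomial-tail estimation and constant-tracking, while intricate, are routine.
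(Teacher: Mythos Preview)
Your overall plan coincides with the paper's: couple $\mathcal{MP}^k$ to a ``pretend the parent is red'' process (what you call descendants-only), bound the resulting red-probability by an upper binomial tail, and iterate. Two points need fixing.

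First, a small but genuine miscount. In your descendants-only protocol a non-root vertex of degree exactly $d^+$ that happens to poll its parent has only $d-1$ children in the polled set, not $d$. The correct domination therefore gives
\[
q_h \le \Pr\bigl(\operatorname{Bin}(d-1,q_{h-1}) \ge (d-1)/2\bigr),
\]
which is exactly what the paper uses; your $\operatorname{Bin}(d,q_{h-1})\ge (d+1)/2$ is not an upper bound in this worst case. Fortunately the tail estimate you quote, $\tfrac12(1+1/\sqrt{d-1})(4q(1-q))^{(d-1)/2}$, is precisely the paper's bound for $\operatorname{Bin}(d-1,\cdot)\ge (d-1)/2$, so once you correct the binomial the rest of your numerics survives.

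Second, and more seriously, your route to the constant $\tfrac14$ does not work. From $\alpha_h\le A(d)^{-2/(d-3)}f(d,p)^{((d-1)/2)^h}$ and $q_h\le \alpha_h/2$ you get
\[
q_h \le \tfrac12\,A(d)^{-2/(d-3)}\,f(d,p)^{((d-1)/2)^h},
\]
and you then want $A(d)^{-2/(d-3)}\le \tfrac12$, i.e.\ $A(d)\ge 2^{(d-3)/2}$. For $d=5$ this is $3\ge 2$, fine; for $d=7$ it asks $2(1+1/\sqrt6)\approx 2.82\ge 4$, which is false, and it only gets worse for larger $d$. Your appeal to ``a sharper form of the binomial tail bound'' is not a fix: you have already thrown away the factor $(1-q_h)^{(d-1)/2}$ in passing from $\alpha$ to the recursion, so there is nothing left to sharpen at that stage. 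The paper avoids the loss by inducting directly on $p_\tau$ rather than on $4p_\tau(1-p_\tau)$: with the ansatz $p_\tau\le \tfrac14\,A(d)^{\sum_{i=0}^{\tau-1}((d-1)/2)^i}(4p(1-p))^{((d-1)/2)^\tau}$, the step $p_{\tau+1}\le \tfrac12(1+1/\sqrt{d-1})(4p_\tau)^{(d-1)/2}$ feeds the $\tfrac14$ back as $(4\cdot\tfrac14)^{(d-1)/2}=1$ and closes exactly. Switching to this direct induction removes the gap with no extra work.
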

\begin{proof}[Overview]
Suppose instead of $\mathcal{MP}^k$ we had a modified version $\mathcal{MMP}^k$ on the tree in which each vertex other than the root $u$ assumes its parent is red. Under the same sequence of random
choices of which neighbours to poll, $\mathcal{MMP}^k$ can only make it more likely that $u$ ends up being red at time step $\t=h$. It also has the advantage of breaking dependencies between vertices at the same depth in the tree. Denoting $p_\t(v)$ the probability of vertex $v$ being red at time
step $\t$, we show that under $\mathcal{MMP}^k$, we get  $p=p_0(v_h)>p_1(v_{h-1})>\ldots>p_{h-1}(v_{1})>p_h(v_0)$ where  $v_i$ is a child of $v_{i-1}$ in the tree and $v_0=u$. In fact, the sequence of probabilities decays very rapidly, and we find that $p_h(v_0) < \frac{1}{4}\brac{f(d, p)}^{\brac{\frac{d-1}{2}}^h}$.
\qed
\end{proof}

\begin{lemma}\label{typicalConvergence}
Let $k \geq 5$ be odd and let $d=m \wedge k$ if $m$ is odd and $d=(m-1) \wedge k$ if $m$ is even. Let $\varepsilon$ be any positive constant, let $\t_*=B\log_d \log_d t$ where $B=B(d,\varepsilon)=\frac{1+\varepsilon}{\log_d\bfrac{d-1}{2}}$ and let $\a^*$ be the smallest positive solution for $x$ in the equation $\Pr\brac{\text{Bin}(d-1,x) \geq \frac{d-1}{2}}=x$. If each vertex in $\PA$ is red independently with probability $\a<\a^*$, then \whp\ under $\mathcal{MP}^k$ every vertex $v \in [t]\setminus[k_o]$ is blue at time steps $\t=\t_*+1, \t_*+2$. 
\end{lemma}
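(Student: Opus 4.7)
The strategy is to apply Lemma \ref{treeConvLemma} to the tree-like neighbourhood of each light vertex. First, since $\tau_* = B\log_d\log_d t = O(\log\log t)$ while $\omega = A\log\log t$ for a large constant $A$, by choosing $A$ large enough we may assume $\tau_* + 2 \leq \omega$. Conditioning on the high-probability structural event supplied by Lemma \ref{CPCLemma}, Lemma \ref{typicalStructureLemma} and Corollary \ref{lightBallCor}, for every $v \in [t]\setminus[\kappa_o]$ the truncated ball $\widetilde{\mathcal{B}}(v, \tau_* + 2)$ is one of the four near-tree shapes (i)--(iv). In the cleanest case (category (i)) every non-leaf of $\widetilde{\mathcal{B}}(v, \tau_* + 2)$ is a light vertex of graph-degree at least $m$, so the $\mathcal{MP}^k$ protocol polls a uniform size-$d$ subset of its neighbours, matching the setup of Lemma \ref{treeConvLemma}.

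Next, I would couple the true dynamics on $\PA$ with dynamics on the tree $\widetilde{\mathcal{B}}(v, \tau_* + 2)$ by pessimistically treating any neighbour lying outside the tree as red at every time step; this stochastically dominates the event that $v$ is red at time $\tau_* + 1$ or $\tau_* + 2$ because making polled neighbours red can only push a vertex toward red under local majority. Lemma \ref{treeConvLemma} applied to the resulting tree process with $p = \alpha$ and $h \in \{\tau_*+1, \tau_*+2\}$ gives
\begin{equation*}
\Pr(\text{$v$ is red at time $h$}) \leq \tfrac{1}{4}\, f(d, \alpha)^{\left((d-1)/2\right)^h}.
\end{equation*}
A direct computation using $B = (1+\varepsilon)/\log_d\bfrac{d-1}{2}$ gives $\bfrac{d-1}{2}^{\tau_*} = d^{(1+\varepsilon)\log_d\log_d t} = (\log_d t)^{1+\varepsilon}$. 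The hypothesis $\alpha < \alpha^*$ makes the single-step fixed-point map $x \mapsto \Pr(\mathrm{Bin}(d-1,x)\geq (d-1)/2)$ a contraction below $\alpha^*$, in particular giving $f(d,\alpha) < 1$, so the bound becomes $\exp(-\Omega((\log t)^{1+\varepsilon}))$, which is much smaller than $1/t^2$.

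For the exceptional categories (ii)--(iv), I would deal with the at-most-one short light cycle and the at-most-two heavy vertices connected to $v$ by short paths (quantified by Lemma \ref{typicalStructureLemma}) by fixing the colours of these "tainted" neighbours to red for the entire run and then applying the tree bound. Because by Lemma \ref{typicalStructureLemma} only a constant number of branches of the BFS tree rooted at $v$ become tainted, the resulting inflated probability is at most a constant multiple of the pure-tree bound, and the doubly-exponential decay is preserved. A union bound over the at most $t$ light vertices and the two time steps $\tau_*+1, \tau_*+2$ yields total failure probability $O(t \cdot e^{-\Omega((\log t)^{1+\varepsilon})}) = o(1)$.

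The main obstacle is making the above coupling between the true $\mathcal{MP}^k$ dynamics on $\PA$ and the auxiliary tree process precise; in particular, one must argue that pessimistically colouring "off-tree" neighbours red stochastically dominates the actual process at vertex $v$ for all $h$ steps simultaneously, and that the short cycle and stray heavy vertices in categories (iii) and (iv) cannot create enough positive feedback through the ball to destroy the contraction at each level of the BFS tree. Once this domination is established, the rest of the argument is just the plug-in of Lemma \ref{treeConvLemma} and the computation above.
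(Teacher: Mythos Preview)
Your overall architecture is right --- domination by a tree process, Lemma~\ref{treeConvLemma}, then a union bound --- but there is a genuine gap in the step where you assert that $\alpha<\alpha^*$ ``in particular gives $f(d,\alpha)<1$''. These are two different functions. The map $x\mapsto \Pr(\mathrm{Bin}(d-1,x)\geq (d-1)/2)$ does satisfy $f(x)<x$ on $(0,\alpha^*)$, but the quantity $f(d,p)$ appearing in Lemma~\ref{treeConvLemma} is the expression in~(\ref{condFunc}), namely $\bigl[(1+1/\sqrt{d-1})\,2\bigr]^{2/(d-3)}\cdot 4p(1-p)$, and this is \emph{not} forced below $1$ by $p<\alpha^*$. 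Concretely, for $d=5$ one has $f(5,p)=12p(1-p)$, so $f(5,p)<1$ only for $p\lesssim 0.092$, whereas $\alpha^*\approx 0.232$. With your direct plug-in, the bound $\frac14 f(d,\alpha)^{((d-1)/2)^h}$ explodes rather than decays on the whole range $0.092<\alpha<0.232$, and the union bound fails. (This is precisely the gap between the threshold of~\cite{LocalMaj} and the improved threshold claimed here.)

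The paper repairs this by inserting a preliminary constant-length phase: it first shows, via elementary calculus on $g(x)=\Pr(\mathrm{Bin}(d-1,x)\geq (d-1)/2)-x$, that on any compact $[c_1,c_2]\subset(0,\alpha^*)$ each iteration of the binomial map decreases the red probability by at least a fixed positive amount, so after a constant number $c_3$ of tree levels the level-wise red probability is below an arbitrarily small constant $c_1$ with $f(d,c_1)<1$. Only then is Lemma~\ref{treeConvLemma} invoked, with $p=c_1$ and height $\tau'=\tau_*-c_3$, yielding the doubly-exponential decay you wrote down. Your treatment of the non-tree categories (ii)--(iv) is in the right spirit; the paper makes it precise by observing from Corollary~\ref{lightBallCor} that $v$ has at most two ``bad'' incident edges (cycle edges and/or edges on short paths to $[\kappa]$), so the remaining $m-2\geq 3$ neighbours $w_i$ root genuinely clean subtrees to which the tree bound applies, and their blue majority at $\tau_*,\tau_*+1$ forces $v$ blue at $\tau_*+1,\tau_*+2$.
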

\begin{proof}

Let integer $n \geq 2$, $f(x)=\Pr\brac{\text{Bin}(2n,x) \geq n}$ and $g(x)=f(x)-x$. Observe $g(0)=0$ and $g(1/2)>0$.  Furthermore, $g'(x)=\binom{2n}{n}nx^{n-1}(1-x)^n-1$, whence $g'(0)=-1$. Therefore $g(x)$ has 
a root $x^*$ in $(0,1/2)$.  Now $g^{''}(x)=\binom{2n}{n}nx^{n-2}(1-x)^{n-1}\left[(n-1)-x(2n-1)\right]$  which is strictly positive on $0<x<\frac{1}{2}-\frac{1}{2(2n-1)}$ and non-positive on $\frac{1}{2}-\frac{1}{2(2n-1)}\leq x<1$. We can therefore deduce that $x^*$ is the unique root of $g(x)$ in $(0,1/2)$, and that for the interval $[c_1,c_2]$ where $0<c_1<c_2<x^*$, $g$ attains a maximum at $c_1$ or $c_2$. 
Hence, for a given $x \in [c_1,c_2]$, we have $0<f(x)<x$ and $x-f(x)=-g(x)>-\max\{g(c_1), g(c_2)\}>0$. Therefore, we need only a constant number of iterations of $f$  until $f(f(\ldots f(x))\ldots)<c_1$. When $2n=d-1$, we write $\a^*=x^*$.

Now consider a rooted tree of depth $h$ where non-leave vertices have $2n=d-1$ children, and suppose that each vertex is coloured red independently with probability $\a<\a^*$ at time $\tau=0$. By the same
argument as in the proof of Lemma \ref{treeConvLemma}, at time $\tau=1$ the depth $h-1$  vertices are red independently with probability $f(\a)< \a-c_2$. Continuing in this way, the probability the root 
is red is at most $c_1$ if $h>c_3$  where $c_3$ is a large enough finite constant. 

Let $\t'=\t_*-c_3$ and suppose that $\mathcal{B}(v,\om)$ is a tree. Since $\om=A\log\log t$ with $A$ arbitrarily large, then we may assume $\om=a\log_d \log_d t$ where $a$ is a constant such that $\om \geq \t_*+3$. This means $\mathcal{B}(w,\t_*)$ is a tree if $w$ is a neighbour of $v$ or $v$ itself. By the above, we may therefore assume that at time $t=c_3$, the depth $\t_*-c_3=\t'$ vertices are red independently with probability at most $c_1$.

Then by Lemma \ref{treeConvLemma} the probability $v$ is red at time step $\t_*$ is at most $\frac{1}{4}f(d,c_1)^{{\bfrac{d-1}{2}}^{\t'}}$. For large enough $t$,
$\t'\geq\frac{1+\varepsilon/2}{\log_d\bfrac{d-1}{2}}\log_d \log_d t$, therefore
\begin{equation*}
{\bfrac{d-1}{2}}^{\t'}\geq {\bfrac{d-1}{2}}^{\frac{1+\varepsilon/2}{\log_d\bfrac{d-1}{2}}\log_d \log_d t}=d^{(1+\varepsilon/2)\log_d \log_d t}=(\log_d t)^{1+\varepsilon/2}.
\end{equation*}
Thus, 
\begin{equation*}
f(d,c_1)^{{\bfrac{d-1}{2}}^{\t'}}\leq d^{-\log_d\bfrac{1}{f(d,c_1)}(\log_d t)^{1+\varepsilon/2}}=t^{-\log_d\bfrac{1}{f(d,c_1)}(\log_d t)^{\varepsilon/2}}.
\end{equation*}
If $f(d,c_1)<\beta<1$ where $\beta$ is a constant then the above is at most $t^{-(\log_d t)^{\varepsilon/4}}$ when $t$ is large enough. By the same logic, and since each of the children of
$v$ are also trees out to distance $\t_*$, the the same probability bound applies to them. Thus, taking the union bound, we see that all vertices $v$ such that $\mathcal{B}(v,\om)$ is a tree are
blue at times $\t_*,\t_*+1, \t_*+2$. 

We extend the above to other vertices outside $[\k_o]$. From Corollary \ref{lightBallCor}, we see that $v$ always has at most two ``bad'' edges that it can assume are always red. Since $m \geq 5$, this leaves $m-2 \geq 3$ ``good'' edges which, if they are blue, will out-vote the bad edges, regardless of what their actual colours are. Thus, suppose $e_1=(v,w_1),\ldots, e_{m-2}=(v,w_{m-2})$ are good edges. As per the proof of Lemma~\ref{treeConvLemma}, the random variables $Y_{\t}(w_i)$ for $i \in \{1,\ldots,m-2\}$ depend only on vertices in the subtree of 
$\widetilde{\mathcal{B}}(v,\om)$ for which $w_i$ is a root. This is a depth--$(\om-1)$ tree where each vertex not a leaf nor root has at least $m-1$ children. Since we may assume that $\om \geq \t_*+2$,
it follows by the above that \whp, all such $w_i$ are blue at time steps $\t_*, \t_*+1, \t_*+2$. This forces $v$ to be blue at time steps $\t_*+1, \t_*+2, \t_*+3$. Thus, we have proved that $\whp$, all vertices
$v \in [t]\setminus[\k_o]$ are blue at time steps $\t_*+1, \t_*+2$. 
\qed
\end{proof}

It remains to consider the vertices  in $[\k_o]$:
\begin{lemma}\label{coreConvergence}
If every vertex in $v \in [t]\setminus[\k_o]$ is blue at time step $\t_*+1$, then \whp\ every $v \in [\k_o]$ is blue at time step $\t_*+2$.  
\end{lemma}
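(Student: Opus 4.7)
The plan is to exploit Lemma~\ref{DegreeLBLemma}, which gives every outer-core vertex $v$ a degree at least $t^{\g - o(1)}$, while the outer core itself has only $\k_o = t^{o(1)}$ vertices. Consequently, when $v$ polls $k$ random neighbours, almost none of them lie inside $[\k_o]$, and by hypothesis every neighbour of $v$ outside $[\k_o]$ is blue at step $\t_*+1$. Thus $v$ will see an overwhelmingly blue sample and turn blue at step $\t_*+2$.

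Concretely, I would fix $v \in [\k_o]$. Since $\k_o=(\log t)^{999\om}$ with $\om=A\log\log t$, we have $\k_o = e^{O((\log\log t)^2)} = t^{o(1)}$, so Lemma~\ref{DegreeLBLemma} gives whp $D_v(t) \geq (t/\k_o)^\g/\k_o^2 \geq t^{\g - o(1)}$. Next I would bound the number $b_v$ of edges of $\PA$ joining $v$ to a vertex of $[\k_o]$: every such edge lies inside the subgraph induced on $[\k_o]$, which by the construction of $\PA$ contains at most $m\k_o$ edges, since only the first $\k_o$ growth steps contribute edges with both endpoints in $[\k_o]$. Hence $b_v \leq 2m\k_o = t^{o(1)}$, and the probability that a single poll from $v$ lands in $[\k_o]$ is at most $p := 2m\k_o/(D_v(t)-k) \leq t^{-\g+o(1)}$.

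To bound the probability that $v$ is red at step $\t_*+2$, I would pessimistically regard every vertex of $[\k_o]$ as red. Then $v$ is red only if at least $\lceil k/2\rceil \geq 3$ of its $k$ polls fall in $[\k_o]$, an event of probability at most $\binom{k}{\lceil k/2\rceil} p^{\lceil k/2\rceil} \leq t^{-3\g + o(1)}$ (sampling without replacement only makes the polls negatively correlated, so the independent-sampling bound remains valid). A union bound over the $\k_o = t^{o(1)}$ outer-core vertices yields total failure probability at most $t^{-3\g + o(1)} = o(1)$. The only calculation of any substance is the edge-count bound on $b_v$; the rest is routine and I foresee no real obstacle.
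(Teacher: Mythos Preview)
Your proposal is correct and follows essentially the same line as the paper: bound the number of edges from $v\in[\k_o]$ into $[\k_o]$ by $O(m\k_o)$ using the construction of $\PA$, invoke Lemma~\ref{DegreeLBLemma} to show $D_v(t)\geq t^{\g-o(1)}$, and conclude that the probability a majority of the $k$ polled neighbours lie in $[\k_o]$ is $t^{-\Omega(1)}$, so a union bound over $[\k_o]$ finishes. Your probability estimate $t^{-3\g+o(1)}$ is in fact a little sharper than what the paper writes, and your remark about negative correlation under sampling without replacement makes explicit a point the paper leaves implicit.
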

\begin{proof}
Consider a vertex $v \in [\k_o]$. We partition $v$'s set of incident edges $E_v$ in $\PA$  into two sets $E_{v1}=\{(v,w) : w \in [\k_o]\}$ and $E_{v2}=E_v\setminus E_{v1}$.  
Clearly, $|E_{v1}| \leq m\k_o$, so by Lemma \ref{DegreeLBLemma}, we may assume that $|E_{v2}| \geq \bfrac{t}{\k_o}^\g\frac{1}{\k_o^2}- m\k_o$ for every $v \in [\k_0]$. Consequently, the 
probability that at time step $\t_*+1$ the majority of edges picked by $v$ are in $E_{v1}$ is zero if $d \geq 2|E_{v1}|+1$ and 
$O\brac{\Pr\brac{\text{Bin}(d,\frac{\k_o^4}{t})>\frac{d}{2}}} = O\brac{\k_o^4/t}$ if  $d \leq 2|E_{v1}|$. Taken over all vertices in $[\k_o]$ this is $o(1)$.
\qed
\end{proof}

\begin{corollary}
With high probability, $\PA$ is entirely blue at all time steps $\t \geq \t_*+2$. 
\end{corollary}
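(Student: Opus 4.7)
The plan is to chain the two preceding lemmas: Lemma \ref{typicalConvergence} handles the vertices outside the outer core, Lemma \ref{coreConvergence} handles the outer core itself, and a trivial monotonicity argument then propagates the all-blue state forward in time.

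First I would invoke Lemma \ref{typicalConvergence} to obtain an event $\mathcal{E}_1$, occurring with high probability, on which every vertex $v \in [t]\setminus[\k_o]$ is blue at both time steps $\t_*+1$ and $\t_*+2$. On $\mathcal{E}_1$ the hypothesis of Lemma \ref{coreConvergence} holds at time $\t_*+1$, so applying that lemma yields a further event $\mathcal{E}_2$ with probability $1-o(1)$ on which every $v \in [\k_o]$ is blue at $\t_*+2$. A union bound on complements shows that $\mathcal{E}_1 \cap \mathcal{E}_2$ still occurs with high probability, and on this intersection the graph $\PA$ is entirely blue at time step $\t_*+2$.

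For the remaining time steps I would use a one-line induction: under $\mathcal{MP}^k$ a vertex $v$ at time $\t+1$ adopts the majority colour of an odd-sized subset of its neighbours at time $\t$, and if every such neighbour is blue then the sampled subset is entirely blue, so the majority is blue and $v$ stays blue at time $\t+1$. Starting the induction at $\t = \t_*+2$ then gives that, on $\mathcal{E}_1 \cap \mathcal{E}_2$, every vertex of $\PA$ is blue at every $\t \geq \t_*+2$, which is exactly the required statement.

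I do not anticipate any substantial obstacle here: the corollary is essentially bookkeeping on top of the two preceding lemmas, and the only thing worth verifying carefully is the elementary fact that the all-blue configuration is absorbing under $\mathcal{MP}^k$ regardless of which odd-sized neighbour subsets happen to be polled at each later step.
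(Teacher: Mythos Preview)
Your proposal is correct and matches the paper's intended argument; the paper in fact states the corollary without proof, but the overview at the start of Section~5 explicitly says ``Since there is a time step in which all vertices are blue, the graph remains blue thereafter,'' which is exactly your absorbing-state induction on top of Lemmas~\ref{typicalConvergence} and~\ref{coreConvergence}. The only cosmetic point is that $\mathcal{E}_2$ is really the whp event on which the implication of Lemma~\ref{coreConvergence} holds (degree bounds plus the random polling picking a majority of outside-core edges), so that $\mathcal{E}_1\cap\mathcal{E}_2$ forces all of $[\k_o]$ blue at $\t_*+2$; you handle this correctly via the union bound.
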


\section{Conclusion and open problems}
We have seen that with high probability, local majority dynamics on preferential attachment graphs with power law exponent at least $3$ very rapidly converges to the initial majority when the initial distribution of red vs. blue opinions is sufficiently biased away from equality. The speed of convergence is affected both by the number of neighbours polled at each step as well structural parameters of the graph, specifically, how many edges are added when a new vertex joins in the construction process of the graph. 

A natural next step would be to analyse the process for $-m<\d<0$, which generates graphs with power-law exponents between $2$ and $3$. These appear to better reflect ``real world'' networks, but our experience suggests that structural differences make the techniques of this paper ineffective in this regime.

Another direction would be to explore how adversarial placements of opinions affects outcome, as studied in \cite{ColinVoting} for random regular graphs. 

\bibliographystyle{abbrv}

\section{Appendix}
\begin{proposition}\label{binprop}
Let $N$ be a natural number and $p \in (0,\frac{1}{2})$. Then 
\begin{equation*}
\Pr\brac{\operatorname{Bin}(2N,p)\geq N} \geq \Pr\brac{\operatorname{Bin}(2N+2,p)\geq N+1}. 
\end{equation*}
\end{proposition}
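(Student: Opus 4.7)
The plan is to reduce the claim to a simple comparison of two binomial probability masses by coupling $\text{Bin}(2N+2,p)$ with $\text{Bin}(2N,p)$. Concretely, write $\text{Bin}(2N+2,p) \stackrel{d}{=} X + Y_1 + Y_2$, where $X \sim \text{Bin}(2N,p)$ and $Y_1,Y_2$ are independent Bernoulli$(p)$ variables independent of $X$. Conditioning on the value of $Y_1+Y_2 \in \{0,1,2\}$ yields
\begin{equation*}
\Pr\brac{\text{Bin}(2N+2,p)\geq N+1} = (1-p)^2\Pr(X\geq N+1) + 2p(1-p)\Pr(X\geq N) + p^2\Pr(X\geq N-1).
\end{equation*}

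Next, since $(1-p)^2 + 2p(1-p) + p^2 = 1$, subtracting $\Pr(X\geq N) = \Pr\brac{\text{Bin}(2N,p)\geq N}$ from both sides and using $\Pr(X\geq N-1) = \Pr(X\geq N) + \Pr(X = N-1)$ and $\Pr(X\geq N+1) = \Pr(X\geq N) - \Pr(X=N)$, the difference collapses to
\begin{equation*}
\Pr\brac{\text{Bin}(2N,p)\geq N} - \Pr\brac{\text{Bin}(2N+2,p)\geq N+1} = (1-p)^2\Pr(X=N) - p^2\Pr(X=N-1).
\end{equation*}
So the lemma will follow once we verify that $(1-p)^2\Pr(X=N) \geq p^2\Pr(X=N-1)$.

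Finally, I would finish by a direct computation with the point masses. Plugging in $\Pr(X=N) = \binom{2N}{N}p^N(1-p)^N$ and $\Pr(X=N-1) = \binom{2N}{N-1}p^{N-1}(1-p)^{N+1}$, the desired inequality becomes
\begin{equation*}
\frac{\binom{2N}{N}}{\binom{2N}{N-1}}\cdot\frac{1-p}{p} \geq 1.
\end{equation*}
The ratio of binomial coefficients equals $(N+1)/N$, so the condition is $(N+1)(1-p) \geq Np$, i.e.\ $p \leq (N+1)/(2N+1)$, which is implied by $p < 1/2$. There is no real obstacle here; the only point to watch is the algebraic bookkeeping when splitting the tail probabilities across the three possible values of $Y_1+Y_2$, and the fact that the hypothesis $p<1/2$ is exactly what is needed (the inequality becomes an equality in the limit $p\to 1/2$ only asymptotically, and is strict otherwise).
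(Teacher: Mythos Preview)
Your proof is correct and essentially identical to the paper's. Both couple $\operatorname{Bin}(2N+2,p)$ as $X+Y$ with $X\sim\operatorname{Bin}(2N,p)$ and $Y\sim\operatorname{Bin}(2,p)$, and both reduce the inequality to the comparison $(1-p)^2\Pr(X=N)\geq p^2\Pr(X=N-1)$ (the paper phrases this as $p_a\ge p_b$ via an indicator-function argument instead of your tail-splitting, but the two computations are the same and yield the identical threshold $p\le (N+1)/(2N+1)$).
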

\begin{proof}
Let $X$ and $Y$ be independent random variables with distributions  $\operatorname{Bin}(2N,p)$ and $\operatorname{Bin}(2,p)$ respectively, and let $Z=X+Y$. Then $\mathbf{1}_{\{X \geq N\}}=\mathbf{1}_{\{Z \geq N+1\}}$ except when  $X=N$ and  $Y=0$,  or $X=N-1$ and $Y=2$. The former case occurs with probability $p_a=\binom{2N}{N}p^N(1-p)^{N+2}$ and the latter with $p_b=\binom{2N}{N-1}p^{N+1}(1-p)^{N+1}$. Observe $p_a \geq p_b$ if and only if $p \leq \frac{N+1}{2N+1}$, which is always the case when $p < \frac{1}{2}$. 
\end{proof}

\textbf{Proof of Lemma \ref{CPCLemma} continued}
Now consider a pair of cycles $(a_1, \ldots, a_r)$ and $(b_1, \ldots, b_s)$ that share a single vertex $a_1=b=1$, and where $1 \leq r,s\leq 2\om+1$. Applying Proposition \ref{structureProp},  the expected number of such structures lying entirely in $[t]\setminus[\k]$ is bounded by
\begin{align*}
& \sum_{r=1}^{2\om+1}\sum_{s=1}^{2\om+1}\sum_{\k< a_1, \ldots, a_r}\sum_{\k< b_2, \ldots, b_s}\frac{M^{r+s}}{(a_1)^2}\prod_{i=2}^r\frac{1}{a_i}\prod_{j=2}^s\frac{1}{b_j}\\
&\lesssim \brac{\int_\k^t x^{-2}\,\mathrm{d}x} \, \sum_{r=1}^{2\om+1}\sum_{s=1}^{2\om+1}(M\log t)^{r+s}\\
& \lesssim \frac{(\log t)^{5\omega}}{\k}=o(1).
\end{align*}

Finally,  consider a cycle $a_1, \ldots, a_r$  and a connecting path $b_0, \ldots, b_\ell$  where $1 \leq r, \ell \leq 2\om+1$. Setting $b_0=a_1$ and $b_\ell=a_x$ where $a_x$ varies over the other $r-1$ vertices of the cycle, the expected number of such structures lying entirely in $[t]\setminus[\k]$ is at most

\begin{equation*}
\sum_{r=1}^{2\om+1}\sum_{\ell=1}^{2\om+1}\sum_{\k<a_1, \ldots, a_r}\sum_{\k<b_1, \ldots, b_\ell}\sum_{x=1}^r\frac{M^{r+\ell}}{(a_1a_x)^{3/2}}\prod_{\substack{i=2 \\i \neq x}}^r\frac{1}{a_i}\prod_{i=1}^{\ell-1}\frac{1}{b_j}
\lesssim \frac{(\log t)^{5\omega}}{\k}=o(1).
\end{equation*}
\qed

\textbf{Proof of Lemma \ref{typicalStructureLemma} continued}
\textbf{(ii)} Suppose the cycle  in question (which may be a self-loop or a pair of parallel edges with another vertex) has length $r$ and the path has length $\ell$. Without loss of generality, assuming $a_1=b_0=v$,
the expected number of such structures is bounded from above by
\begin{equation*}
\k \sum_{r = 1}^{2\om+1}\sum_{\ell = 1}^{\om}\sum_{\k<a_1,\ldots,a_r}\sum_{\k<b_1,\ldots,b_{\ell-1}}\frac{M^{r+\ell}}{v^{3/2}}\prod_{i=2}^{r}\frac{1}{a_i} \prod_{j=1}^{\ell-1}\frac{1}{b_j}\lesssim \frac{\k(M\log t)^{4\om}}{v^{3/2}}.
\end{equation*}
Summing over all $v>\k_0$ this is $O\bfrac{\k(M\log t)^{4\om}}{\k_o^{1/2}}=o(1)$.
\\

\textbf{(iii)} Suppose $v$ has two edges $e_1,e_2$ on short paths into $[\k]$ and that neither is on $P$. By similar reasoning to part \textbf{(i)}, there is a minimal structure $S$ with light vertices 
$a_1, \ldots, a_r$ where $r \leq 5\om$, with at most $5\om$ edges and where each light vertex has at least edges. Applying Proposition \ref{structureProp}, the expected number of structures $S$ for a given $v$ is asymptotically bounded from above by
\begin{equation*}
\k^2 M^{5\om}\sum_{r=1}^{5\om}\sum_{\k<a_1, \ldots, a_r}\frac{1}{v^{3/2}}\prod_{i=1}^r\frac{1}{a_i} \lesssim \frac{5\om\k^2 (M \log t)^{5\om}}{v^{3/2}}.
\end{equation*}
Summing over all $v>\k_0$ this is $O\bfrac{\k(M\log t)^{4\om}}{\k_o^{1/2}}=o(1)$.
\qed

\textbf{Proof of Lemma \ref{treeConvLemma}}

Let $N_v(\t)$ be the (random) set of neighbours of $v$ selected at time step $\t$. Let $d_v(k)$ be the minimum between $k$ and the largest odd number not larger the degree of $v$. Observe $|N_v(\t)|=d_v(k) \geq d$. For $v \neq u$, define $\text{Par}(v)$ to be the parent of $v$ in $T_u(h,d)$. 

We define the indicator random variables $X_0(v)=Y_0(v)=1$ if and only if vertex $v$ is coloured blue at time $\t=0$, and for $\t>0$,
\begin{equation*}
X_{\t}(v)=\mathbf{1}_{\left\{\brac{\sum_{w \in N_v(\t)}X_{\t-1}(w)} > \frac{|N_v(\t)|}{2}\right\}}
\end{equation*}
for all $v$, and 
\begin{equation*}
Y_{\t}(v)=\mathbf{1}_{\left\{\brac{\sum_{w \in N_v(\t)\setminus\{\text{Par}(v)\}}Y_{\t-1}(w)} > \frac{|N_v(\t)|}{2}\right\}}
\end{equation*}
for all $v \neq u$. Thus, $X_{\t}(v)$ represent the outcome under $\mathcal{MP}^k$ and  $Y_{\t}(v)$ is blue if and only if the number of blue children forms the majority. 

Observe $Y_{\t}(v) \leq X_{\t}(v)$ for all $v$ and $\t \geq 0$, for suppose it is the case for $\t-1$, then
\begin{eqnarray*}
Y_{\t}(v)=\mathbf{1}_{\left\{\brac{\sum_{w \in N_v(\t)\setminus\{\text{Par}(v)\}}Y_{\t-1}(w)} > \frac{|N_v(\t)|}{2}\right\}}&\leq& \mathbf{1}_{\left\{\brac{\sum_{w \in N_v(\t)\setminus\{\text{Par}(v)\}}X_{\t-1}(w)} > \frac{|N_v(\t)|}{2}\right\}}\\
 &\leq&  \mathbf{1}_{\left\{\brac{\sum_{w \in N_v(\t)}X_{\t-1}(w)} > \frac{|N_v(\t)|}{2}\right\}}=X_{\t}(v).
\end{eqnarray*}
Let $p_\t(v)=\Pr\brac{Y_\t(v)=0}$. 

We will show that $p=p_0(v_h)>p_1(v_{h-1})>\ldots>p_{h-1}(v_{1})>p_h(v_0)$ where  $v_i$ is a child of $v_{i-1}$ in the tree and $v_0=u$. In fact, the sequence of probabilities decays doubly-exponentially, with the implication that taking a union bound over many trees will still result in a value that is $o(1)$. 

Consider a vertex $v$ at depth $h-1$. Its children are leaves which are coloured red at time $\t=0$ independently with probability $p$. Therefore, 
\begin{equation*}
p_1(v) \leq \Pr\left(\operatorname{Bin}(d_v(k)-1,p)  \geq \frac{d_v(k)-1}{2} \right)\leq \Pr\left(\operatorname{Bin}(d-1,p)  \geq \frac{d-1}{2} \right)
\end{equation*}
where the second inequality follows by Proposition \ref{binprop} (see appendix). 

Since $p<\frac{1}{2}$, 
\begin{eqnarray*}
p_1(v)\leq\sum_{i=\frac{d-1}{2}}^{d-1}\binom{d-1}{i}p^i(1-p)^{d-1-i} &\leq& p^{\frac{d-1}{2}}(1-p)^{\frac{d-1}{2}}\sum_{i=\frac{d-1}{2}}^{d-1}\binom{d-1}{i}\nonumber \\
&=&p^{\frac{d-1}{2}}(1-p)^{\frac{d-1}{2}} \brac{\frac{1}{2}2^{d-1}+\frac{1}{2}\binom{d-1}{\frac{d-1}{2}}}.
\end{eqnarray*}
Using the inequality $\binom{2n}{n}\leq \frac{2^{2n}}{\sqrt{2n}}$, we have  $p_1(v) \leq \frac{1}{2}(1+\frac{1}{\sqrt{d-1}})(4p(1-p))^{\frac{d-1}{2}}$.

Let $D(x)$ denote the depth of a vertex $x$, i.e., its distance from $u$ in the tree. Observe that $\{Y_{1}(v) : D(v)=h-1\}$, $\{Y_{2}(v) : D(v)=h-2\}, \ldots ,\{Y_{h-1}(v) : D(v)=1\}$ 
are sets of independent random variables. 

Suppose that at $\t<h$ the following inequality held for all $v$ such that $D(v)=h-\t$:
\begin{equation*}
p_{\t}(v) \leq	\frac{1}{4}\left[\left(1+\frac{1}{\sqrt{d-1}}\right)2\right]^{\sum_{i=0}^{\tau-1}\brac{\frac{d-1}{2}}^i}\left(4p(1-p)\right)^{\brac{\frac{d-1}{2}}^\tau}<\frac{1}{2}, 
\end{equation*}
and define $p_\t$ to be the RHS of the above inequality. Then for $\t+1$ and all $v$ such that $D(v)=h-t-1$,
\begin{eqnarray*}
p_{\t+1}(v)&\leq&\sum_{i=\frac{d-1}{2}}^{d-1}\binom{d-1}{i}p_\t^i(1-p_\t)^{d-1-i}\\
&\leq& \frac{1}{2}\left(1+\frac{1}{\sqrt{d-1}}\right)\left(4p_\t(1-p_\t)\right)^{\frac{d-1}{2}}\\
&\leq& \frac{1}{2}\left(1+\frac{1}{\sqrt{d-1}}\right)\left(4p_\t\right)^\frac{d-1}{2}\\
&\leq& \frac{1}{2}\left(1+\frac{1}{\sqrt{d-1}}\right)\left(\left[\left(1+\frac{1}{\sqrt{d-1}}\right)2\right]^{\sum_{i=0}^{\t-1}\brac{\frac{d-1}{2}}^i}\left(4p(1-p)\right)^{\brac{\frac{d-1}{2}}^\t}\right)^\frac{d-1}{2}\\
&=& \frac{1}{4}\left[\left(1+\frac{1}{\sqrt{d-1}}\right)2\right]^{\sum_{i=0}^{\t}\brac{\frac{d-1}{2}}^i}\left(4p(1-p)\right)^{\brac{\frac{d-1}{2}}^{\t+1}}\\
&=&p_{\t+1}.
\end{eqnarray*}

Hence, for $\t \leq h$, and all $v$ such that $D(v) = h-\t$, we have
\begin{equation*}
p_\t(v) \leq \frac{1}{4}\left(\left[\left(1+\frac{1}{\sqrt{d-1}}\right)2\right]^{\frac{2}{d-3}}4p(1-p)\right)^{\brac{\frac{d-1}{2}}^\t}=\frac{1}{4}\brac{f(d, p)}^{\brac{\frac{d-1}{2}}^\t}.
\end{equation*}
\qed
\end{document}